\documentclass[reqno]{amsart}
\author{Zhenghui Huo}
\usepackage[all]{xy}
\usepackage{enumitem}
\usepackage{amssymb,amsmath,url}
\usepackage{setspace}
\usepackage{amsthm}
\usepackage{cite}
\usepackage{color}
\usepackage[normalem]{ulem}
\begin{document}
\title{The Bergman Kernel on some Hartogs Domains}
\address{Dept. of Mathematics, Univ. of Illinois, 1409 W. Green St., Urbana IL 61801}
\email{huo3@illinois.edu}

\newtheorem{cl}{Claim}[section]
\newtheorem{lem}{Lemma}[section]
\newtheorem{thm}{Theorem}
\newtheorem{ex}{Example}[section]
\newtheorem{de}{Definition}[section]
\newtheorem{co}{Corollary}[section]
\newtheorem*{re*}{Remark}
\begin{abstract}
We obtain new explicit formulas for the Bergman kernel function on two families of Hartogs domains. To do so, we first compute the Bergman kernels on the slices of these Hartogs domains
with some coordinates fixed, evaluate these kernel functions at certain points off the diagonal, and then apply a first order differential operator to them. We find, for example, explicit formulas for the kernel function on $$\{(z_1,z_2,w)\in\mathbb C^3:e^{|w|^2}|z_1|^2+|z_2|^2<1\}$$ and on $$\{(z_1,z_2,w)\in\mathbb C^3:|z_1|^2+|z_2|^2+|w|^2<1+|z_2w|^2\;{\rm and} \;|w|<1\}.$$
We use our formulas to determine the boundary behavior of the kernel function of these domains on the diagonal.

\:\:
\noindent
{\bf AMS Classification Number:} 32A05, 32A07, 32A25, 32A36, 32A40.

\:\:
\noindent
{\bf Key Words:} Bergman kernel, Reinhardt domain, Hartogs domain, boundary behavior.
\end{abstract}
\maketitle
\section{Introduction}
The Bergman kernel, introduced by Stefan Bergman\cite{Bergman}, is a useful tool in the study of several complex variables. A formula for the Bergman kernel in terms of elementary functions is known in only a few cases. If $\{\phi_j\}$ is a complete orthonormal system of $A^2(\Omega)$, then the kernel function on $\Omega$ satisfies
$$K_{\Omega}(z;\bar\zeta)=\sum_{j}\phi_j(z)\overline {\phi_j(\zeta)}.$$
Suppose $\Omega\subseteq\mathbb C^n$, not necessarily bounded, is a Reinhardt domain containing the origin.  For $\alpha=(\alpha_1,\dots,\alpha_n)\in \mathbb N^n$ and $z\in \mathbb C^n$, let $z^{\alpha}$ be the standard multi-index notation. Set $\mathfrak I=\{\alpha:z^{\alpha}\in L^2(\Omega)\}$.  Then $\{z^{\alpha}\}_{\alpha\in\mathfrak I}$ is a complete orthogonal system of $A^2(\Omega)$ and the Bergman kernel $K_{\Omega}$ satisfies:
\begin{equation}\label{1}
K_{\Omega}(z;\bar\zeta)=\sum_{\alpha\in \mathfrak I}\frac{(z\bar\zeta)^{\alpha}}{\|z^{\alpha}\|^2_{L^2(\Omega)}}.
\end{equation}
  In some cases, (\ref{1}) can be expressed in terms of elementary functions. D'Angelo \cite{1978D'A1,1994D'A2} gave the explicit formula of the Bergman kernel function on the domain $\Omega=\{(z,w)\in \mathbb C^{n+m}: \|z\|^2+\|w\|^{2p}<1\}$ for any positive real $p$.
Fransics and Hanges \cite{1996FH1,1997FH2} expressed the Bergman kernel on complex ovals in terms of generalized hypergeometric functions. Here a complex oval is a domain given by $$\{z\in \mathbb C^n:\sum_{j=1}^{n}|z_j|^{2a_j}<1\}$$ where $a_j$'s are positive integers. Park\cite{2008Park1}, by applying the method of \cite{1997FH2}, computed the Bergman kernel on $\{(z,w)\in \mathbb C^2: |z|^4+|w|^4<1\}$ explicitly and proved that the Bergman kernel on $$\{(z,w)\in \mathbb C^2: |z|^{2p_1}+|w|^{2p_2}<1\}$$ for positive integers $p_i$'s has an explicit formulas in terms of elementary functions in only two cases:
  \begin{itemize}
  	\item $p_i=1$ for $i=1$ or $2$.
  	\item $p_1=p_2=2$. 
  \end{itemize}
  In \cite{2013Park2}, Park obtained an explicit formula for the Bergman kernel on the domain $\{(z_1,z_2,z_3)\in \mathbb C^3: |z_1|^{4}+|z_2|^{4}+|z_3|^{4}<1\}$.

Boas, Fu, and Straube \cite{1999BFS} introduced a different method. They considered the domain $\Omega=\{(z,w)\in \mathbb C\times\mathbb C^n:|z|<p(w)\}$ where $p(w)$ is a bounded, positive, continuous function on the interior
of
some bounded domain in
$\mathbb C^n$. By differentiating the Bergman kernel on $\Omega$, they obtained the kernel function on \begin{equation}\label{e}
\{(z,w)\in \mathbb C^m\times\mathbb C^n:\|z\|<p(w)\}.\end{equation}
Additional results have been obtained in \cite{2015Beberok} on the domains
$$\{(z_1,z_2,z_3)\in \mathbb C^{3}:(|z_1|^{2p}+|z_2|^4)^{1/\lambda}+|z_3|^{2/q}<1\},$$
and in \cite{2013Ya} on the Fock-Bargmann-Hartogs domain
$$\{(z,w)\in \mathbb C^{n+m}:\|z\|<e^{-a\|w\|^2}\}.$$

The method used in our paper is new. Using it, we rediscover some of the formulas mentioned above, and we also obtain some new explicit formulas. See Examples 4.2 and 4.3. The operator we use differs from that in \cite{1999BFS}. If we start with the domain $\{(z,w)\in \mathbb C\times\mathbb C^n:|z|<p(w)\}$ from \cite{1999BFS}, the kernel function obtained through our method is on the domain 
$$\{(z_1,w,z_2) \in \mathbb C\times\mathbb C^n\times\mathbb C: |z_1|<p(w)(1-|z_2|^2)^{\alpha}\;{\rm and}\;|z_2|<1\}$$
with $\alpha>0$. By contrast, the result in \cite{1999BFS} applies when $\Omega$ is defined as in (\ref{e}).

We illustrate our idea using the following special case of Example 4.1:

\paragraph{\textbf{Example}}Let $\Omega=\{(z,w)\in \mathbb C^2:|z|^{2a}+|w|^2<1\}$. Regarding $w$ for $|w|<1$ as a parameter, we obtain a family of domains $\{\Omega_{w}\}$ in $\mathbb C$ with $$\Omega_{w}=\Big\{z\in \mathbb C:\frac{|z|^2}{(1-|w|^2)^{\frac{1}{a}}}<1\Big\}.$$
For each $\eta\in \mathbb C$ with $|\eta|<1$, $\Omega_\eta$ is biholomorphic to the unit disk.  Applying  the biholomorphic transformation rule to the Bergman kernel $K_{\Omega_\eta}$ on $\Omega_\eta$ yields:
\begin{equation}\label{b}
K_{\Omega_\eta}(z;\bar \zeta)=\frac{(1-|\eta|^2)^{\frac{1}{a}}}{\pi\big((1-|\eta|^2)^{\frac{1}{a}}-{z\bar{\zeta}}\big)^2}.
\end{equation}
Replacing $z$ in (\ref{b}) by $z(\frac{(1-|\eta|^2)}{(1-w\bar{\eta})})^{\frac{1}{a}}$ and multiplying the right hand side of (\ref{b}) by $(1-|\eta|^2)^{\frac{1}{a}}$ yield a Hermitian symmetric function $K_1$ on $\Omega\times\Omega$:
\begin{equation}\label{c}
K_1(z,w;\bar \zeta,\bar \eta)=\frac{(1-w\bar{\eta})^{\frac{2}{a}}}{\pi\big((1-w\bar{\eta})^{\frac{1}{a}}-{z\bar{\zeta}}\big)^2}.\end{equation}
Let $I$ denote the identity operator. Applying the first order differential operator  $$D_\Omega=\frac{1}{\pi(1-w\bar{\eta})^{2+\frac{1}{a}}}\Big((1+\frac{1}{a})I+\frac{1}{a}z\frac{\partial}{\partial z}\Big),$$
to $K_1$, we obtain $$\frac{(1+a)(1-w\bar{\eta})^{\frac{1}{a}}+(1-a)z\bar{\zeta}}{\pi^2a(1-w\bar{\eta})^{2-\frac{1}{a}}\big((1-w\bar{\eta})^{\frac{1}{a}}-{z\bar{\zeta}}\big)^3}.$$ We then can verify that this function is the Bergman kernel on $\Omega$ (It agrees with the results in \cite{1978D'A1}).

We generalize this technique as follows. We consider certain Hartogs domains $\mathcal U_w$ in $\mathbb C^n$, depending on a parameter $w$ in $\mathbb C$, with the kernel function known on the domain $\Omega=\mathcal U_0$. We call $\Omega$ the ``base" domain. By regarding the parameter $w$ of $\mathcal U_w$ as a new complex variable, we construct a domain $\mathcal U$ in $\mathbb C^n\times\mathbb C$. We call $\mathcal U$ the ``target" domain. As in the example, we obtain the Bergman kernel on $\mathcal U$ by the following procedure:
\begin{enumerate}
\item compute the Bergman kernel $K_{\mathcal U_w}$.
\item evaluate $K_{\mathcal U_w}$ off the diagonal.
\item obtain a Hermitian symmetric function $K$ on $\mathcal U$ by multiplying the result in Step (2) by a certain function.
\item apply a first order differential operator $D_{\mathcal U}$ to $K$.
\item verify that the result in Step (4) is the Bergman kernel on $\mathcal U$.
\end{enumerate}
Our technique works for two kinds of $\mathcal U$ with certain nice properties. We introduce the term ``$n$-star-shaped" for these properties in Section 2, define these two kinds of $\mathcal U$ at the beginning of Section 3, and then demonstrate our formulas for $K_{\mathcal U}$.

In Sections 5 and 6, we investigate the boundary behavior of the kernel functions on the diagonal in some suitable approach regions using the formulas we have obtained. Section 5 uses our explicit formulas in Examples 4.2 and 4.3. Theorems 3 and 4 in Section 6 provide more general results for the case when the slices of the domain are strongly pseudoconvex. In Example 7.1, we apply Theorems 1 and 2 repeatedly, obtaining explicit formulas for the kernel function on rather elaborate domains. We also offer higher dimensional refinements of Theorems 1 and 2.

\paragraph{\bf Acknowledgements}These results are part of the author's PhD thesis at the University of Illinois at Urbana-Champaign. The author acknowledges his thesis advisor Professor John D'Angelo for his patience, encouragement, and valuable advice. The author acknowledges Professor Jeff McNeal for discussions about the boundary behavior of the Bergman kernel. The author also thanks Luke Edholm for helpful conversations. The author thanks the two referees who provided constructive criticisms. This paper is supported by NSF grant DMS 13-61001 of D'Angelo.

\section{Preliminaries}
Let $\Omega$ be a domain in complex Euclidean space $\mathbb C^n$. The space $A^2(\Omega)$, consisting of square integrable holomorphic functions on $\Omega$, is closed in $L^2(\Omega)$ and hence a Hilbert space. We denote by $P$ the orthogonal projection from $L^2(\Omega)$ to $A^2(\Omega)$. Let $\mathbf a=(a_1,\dots,a_n)$ and $z^{\mathbf a}$ be the standard multi-index notation.

We recall the definition of the Bergman kernel. See \cite{Krantz} for more details. Let $z\in \Omega$. For all $f\in A^2(\Omega)$, the map $\delta_z$ from $A^2(\Omega)$ to $\mathbb C$ defined by
\begin{equation}
\delta_z(f)=f(z)
\end{equation}
is a bounded linear functional.
By Riesz's representation theorem, there exists a unique $K_z\in A^2(\Omega)$ such that
\begin{equation}
\delta_z(f)=f(z)=\int_{\Omega}\overline {K_z(\zeta)}f(\zeta)dV(\zeta).
\end{equation}
The Bergman kernel function $K_{\Omega}$ is defined by $K_{\Omega}(z,\bar\zeta)=K_z(\zeta)$. Then 
$$Pf(z)=\int_{\Omega}K_{\Omega}(z,\bar \zeta)f(\zeta)dV(\zeta),\;\;\;\;  \forall f\in L^2(\Omega).$$
These considerations lead to the following lemma (See, e.g., Prop.
1.4.6 in \cite{Krantz}):
\begin{lem}
A function $K:\Omega\times\Omega\rightarrow \mathbb C$ is the Bergman kernel function on $\Omega$ if and only if:
\begin{enumerate}
\item For each $\zeta\in \Omega$, the map $z\mapsto K(z,\bar{\zeta})$ is in $A^2(\Omega)$.
\item $\overline {K(z,\bar w)}=K(w,\bar z)$.\; (Hermitian symmetry)
\item $\int_{\Omega}K(z,\bar w)f(w)dV(w)=f(z)$ for all $f\in A^2(\Omega)$.\; (Reproducing property)
\end{enumerate}
\end{lem}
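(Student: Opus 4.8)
The plan is to prove the two implications separately, leaning on the Riesz representation setup recorded above and on the orthonormal expansion $K_\Omega(z,\bar\zeta)=\sum_j\phi_j(z)\overline{\phi_j(\zeta)}$ for a complete orthonormal system $\{\phi_j\}$ of $A^2(\Omega)$. For the ``only if'' direction I would first check that the genuine Bergman kernel $K_\Omega$ has all three properties. Property (1) holds because, for each fixed $\zeta$, the slice $z\mapsto K_\Omega(z,\bar\zeta)=\sum_j\overline{\phi_j(\zeta)}\,\phi_j(z)$ is a holomorphic function whose squared $L^2$-norm in $z$ equals $\sum_j|\phi_j(\zeta)|^2=K_\Omega(\zeta,\bar\zeta)<\infty$, so it lies in $A^2(\Omega)$. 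Property (3) is the reproducing identity itself: substituting the expansion and interchanging sum and integral (justified by $A^2$-convergence of the series) gives $\int_\Omega K_\Omega(z,\bar w)f(w)\,dV(w)=\sum_j\phi_j(z)\langle f,\phi_j\rangle=f(z)$.

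For property (2), the Hermitian symmetry, I would conjugate the expansion termwise:
\begin{equation*}
\overline{K_\Omega(z,\bar w)}=\overline{\sum_j\phi_j(z)\overline{\phi_j(w)}}=\sum_j\phi_j(w)\overline{\phi_j(z)}=K_\Omega(w,\bar z).
\end{equation*}
This completes the ``only if'' direction.

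For the ``if'' direction I would argue by uniqueness: assuming $K$ satisfies (1)--(3), I want $K=K_\Omega$. Fix $z,\zeta\in\Omega$. Since the slice $K(\cdot,\bar\zeta)$ lies in $A^2(\Omega)$ by (1), reproducing it against $K_\Omega$ via property (3) of $K_\Omega$ gives one identity; since $K_\Omega(\cdot,\bar\zeta)\in A^2(\Omega)$ and $K$ satisfies (3), reproducing $K_\Omega(\cdot,\bar\zeta)$ against $K$ gives a second:
\begin{equation*}
\int_\Omega K_\Omega(z,\bar w)\,K(w,\bar\zeta)\,dV(w)=K(z,\bar\zeta),\qquad \int_\Omega K(z,\bar w)\,K_\Omega(w,\bar\zeta)\,dV(w)=K_\Omega(z,\bar\zeta).
\end{equation*}
Conjugating the second identity and rewriting each conjugated factor by the Hermitian symmetry (2) (applied to $K$ and to $K_\Omega$) turns its integral into $\int_\Omega K_\Omega(\zeta,\bar w)\,K(w,\bar z)\,dV(w)$; comparing with the first identity after the relabeling $z\leftrightarrow\zeta$ forces $K(\zeta,\bar z)=\overline{K_\Omega(z,\bar\zeta)}=K_\Omega(\zeta,\bar z)$, whence $K=K_\Omega$.

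The step I expect to require the most care is the bookkeeping in this uniqueness argument: keeping the holomorphic and antiholomorphic slots straight through the conjugations, and making sure each function to which the reproducing property (3) is applied is genuinely holomorphic and square-integrable, so that hypothesis (1) is invoked exactly where it is needed. Everything else is a direct consequence of the Hilbert-space structure of $A^2(\Omega)$, the Riesz representation theorem, and the orthonormal expansion of the kernel.
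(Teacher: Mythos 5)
Your proof is correct: the ``only if'' direction via the orthonormal expansion and the ``if'' direction via reproducing $K(\cdot,\bar\zeta)$ against $K_\Omega$ and vice versa, then conjugating with the Hermitian symmetry, is exactly the standard uniqueness argument. The paper itself gives no proof of this lemma, deferring to Prop.\ 1.4.6 of \cite{Krantz}, and your argument is essentially the one found there, so there is nothing to flag.
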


We will also need the transformation rule for the Bergman kernel under biholomorphic maps. Let $F:\Omega_1\rightarrow\Omega_2$ be biholomorphic. Lemma 2.1 implies the transformation formula: 
\begin{equation}\label{4}
K_{\Omega_1}(z,\bar{w})=JF(z)\overline {JF( w)}K_{\Omega_2}(F(z),\overline {F(w)}),
\end{equation}
where $JF$ is the holomorphic Jacobian determinant of $F$.

If $\{\phi_j\}$ is a complete orthonormal system in $A^2(\Omega)$, then
\begin{equation}
K(z,\bar w)=\sum_{j=1}^{\infty}\phi_j(z)\overline{\phi_j(w)}
\end{equation}
with normal convergence in $\Omega\times \Omega$. 

Let $z_1,\dots,z_n$ and $\zeta_1,\dots,\zeta_m$ denote the first $n$ and last $m$ coordinates in $\mathbb C^{n+m}$. Let $\Omega\subseteq \mathbb C^{n+m}$ be the ``base" domain. Our method of obtaining Bergman kernels requires the space $A^2(\Omega)$ to have a complete orthogonal system of form $\{z^{\mathbf a}\phi_{\mathbf a}(z^{\prime})\}$. This consideration leads us to a class of domains with a symmetry property in the $z$ coordinates. We call these domains $n$-star-shaped Hartogs domains. Before defining them, we recall the definition of Hartogs domain.
\begin{de}
An open connected set $\Omega\subseteq \mathbb C^{n}$ is called a Hartogs domain with symmetric plane $\{z_j=a_j\}$ if $(z_1,\cdots,z_n)\in \Omega$ implies $$(z_1,\dots,z_{j-1},a_j+e^{i\theta}(z_j-a_j),z_{j+1},\dots,z_n)\in \Omega$$ for all $\theta\in\mathbb R$. Such a Hartogs domain $\Omega$ is called complete if
$$(z_1,\dots,z_{j-1},a_j+\lambda(z_j-a_j),z_{j+1},\dots,z_n)\in \Omega$$
for all $\lambda\in \mathbb C$ with $|\lambda|\leq 1$.
\end{de}
If $\Omega$ is a complete Hartogs domain with symmetric plane $\{z_j=0\}$, then the slices of $\Omega$ with all coordinates except $z_j$ fixed are disks in $\mathbb C$. The $n$-star-shaped Hartogs domain is the higher dimensional analogue of such a complete Hartogs domain:
\begin{de}
A domain {$\Omega\subseteq\mathbb C^{n+m}$} is called $n$-star-shaped Hartogs in $(z_1,\dots,z_n)$ if $(z_1,\dots,z_n,\zeta)\in \Omega$ implies 
$\{(\lambda_1z_1,\dots,\lambda_n z_n,\zeta):
|\lambda_j|\leq 1 \:\:{\rm{{for}}}\:\: 1\leq j\leq n\}\subseteq\Omega$.
\end{de}
Let $\Omega\subseteq \mathbb C^{n+m}$ be an $n$-star-shaped Hartogs domain in the first $n$ coordinates. The slices of $\Omega$ with the last $m$ variables fixed are polydisks in $\mathbb C^n$. A holomorphic function $f$ on such a $\Omega$ has the expansion:
\begin{equation*}
f(z,\zeta)=\sum_{\mathbf a}\phi_{\mathbf a}(\zeta)z^{\mathbf a}.
\end{equation*}
Here $\phi_{\mathbf a}(\zeta)$ is holomorphic in $\zeta$ and the series converges normally in $\Omega$.  

Let $\pi:(z,\zeta)\mapsto\zeta$ denote the projection from $\mathbb C^{n}\times \mathbb C^m$ to $\mathbb C^{m}$. For $\zeta\in \pi(\Omega)$, we set $\Omega_{\zeta}=\{z\in \mathbb C^n:(z,\zeta)\in\Omega\}$. The following lemma is a version of Ligocka's result in \cite{Ligocka}. The idea has also appeared in \cite{1999BFS,FR}. In our version, no boundedness condition is assumed for $\Omega$. For convenience, we provide a complete proof below.
\begin{lem}Let $\Omega\subseteq\mathbb C^{n+m}$ be $n$-star-shaped Hartogs in $(z_1,\dots,z_n)$. Then
\begin{enumerate}
\item For any $f\in A^2(\Omega)$, 
\begin{equation*}
f(z,\zeta)=\sum_{\mathbf a}\phi_{\mathbf a}(\zeta)z^{\mathbf a},
\end{equation*}
where for each multi-index $\mathbf a$, $\phi_{\mathbf a}$ is a square-integrable holomorphic function with respect to the measure 
\begin{equation*}
\|z^{\mathbf a}\|^2_{\Omega_{\zeta}}dV(\zeta).
\end{equation*}
\item   If $\{\phi_{\mathbf a,\mathbf b}\}$ is a complete orthogonal system for $A^2(\pi(\Omega),\|z^{\mathbf a}\|^2_{\Omega_{\zeta}})$, then $\{\phi_{\mathbf a,\mathbf b}z^{\mathbf a}\}$ forms a complete orthogonal system for $A^2(\Omega)$.
\end{enumerate}
\end{lem}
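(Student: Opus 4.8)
The plan is to reduce everything to the slice decomposition of $\Omega$ and to exploit that, by the $n$-star-shaped hypothesis, each slice $\Omega_\zeta$ is a complete Reinhardt domain in $\mathbb C^n$, on which the monomials $z^{\mathbf a}$ form a complete orthogonal system with $\int_{\Omega_\zeta}z^{\mathbf a}\overline{z^{\mathbf a'}}\,dV(z)=\delta_{\mathbf a\mathbf a'}\|z^{\mathbf a}\|_{\Omega_\zeta}^2$. For part (1), I would fix $f\in A^2(\Omega)$ and apply Fubini--Tonelli to write $\|f\|_\Omega^2=\int_{\pi(\Omega)}\big(\int_{\Omega_\zeta}|f(z,\zeta)|^2\,dV(z)\big)\,dV(\zeta)$, so that $f(\cdot,\zeta)\in A^2(\Omega_\zeta)$ for almost every $\zeta$. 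On each such slice the series $f(z,\zeta)=\sum_{\mathbf a}\phi_{\mathbf a}(\zeta)z^{\mathbf a}$ is the genuine power series expansion of a holomorphic function on a complete Reinhardt domain, with coefficients given by the Cauchy integral $\phi_{\mathbf a}(\zeta)=(2\pi i)^{-n}\oint\cdots\oint f(z,\zeta)z_1^{-a_1-1}\cdots z_n^{-a_n-1}\,dz_1\cdots dz_n$. Since $f$ is jointly holomorphic, differentiating under this integral (or a Morera argument) shows each $\phi_{\mathbf a}$ is holomorphic in $\zeta$.

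Then I would invoke the monomial orthogonality on the slice to obtain the Parseval identity $\int_{\Omega_\zeta}|f(z,\zeta)|^2\,dV(z)=\sum_{\mathbf a}|\phi_{\mathbf a}(\zeta)|^2\|z^{\mathbf a}\|_{\Omega_\zeta}^2$. Integrating in $\zeta$ and interchanging sum and integral by monotone convergence (all summands being nonnegative) yields $\|f\|_\Omega^2=\sum_{\mathbf a}\int_{\pi(\Omega)}|\phi_{\mathbf a}(\zeta)|^2\|z^{\mathbf a}\|_{\Omega_\zeta}^2\,dV(\zeta)$. In particular each of these integrals is finite, so every $\phi_{\mathbf a}$ lies in $A^2(\pi(\Omega),\|z^{\mathbf a}\|_{\Omega_\zeta}^2)$, which is exactly the assertion of part (1).

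For part (2), orthogonality of the proposed system follows from the same Fubini computation: $\langle \phi_{\mathbf a,\mathbf b}z^{\mathbf a},\phi_{\mathbf a',\mathbf b'}z^{\mathbf a'}\rangle_\Omega$ collapses, via the monomial orthogonality on slices, to $\delta_{\mathbf a\mathbf a'}\langle\phi_{\mathbf a,\mathbf b},\phi_{\mathbf a,\mathbf b'}\rangle$ computed in the weighted space $A^2(\pi(\Omega),\|z^{\mathbf a}\|_{\Omega_\zeta}^2)$, which vanishes unless $(\mathbf a,\mathbf b)=(\mathbf a',\mathbf b')$. For completeness I would take any $f\in A^2(\Omega)$ orthogonal to every $\phi_{\mathbf a,\mathbf b}z^{\mathbf a}$; writing $f=\sum_{\mathbf a}\phi_{\mathbf a}(\zeta)z^{\mathbf a}$ from part (1), the identical computation gives $\langle f,\phi_{\mathbf a,\mathbf b}z^{\mathbf a}\rangle_\Omega=\langle\phi_{\mathbf a},\phi_{\mathbf a,\mathbf b}\rangle$ in the weighted space. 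Vanishing for all $\mathbf b$ forces $\phi_{\mathbf a}=0$ there, by completeness of $\{\phi_{\mathbf a,\mathbf b}\}_{\mathbf b}$; since this holds for each $\mathbf a$, I conclude $f=0$, establishing completeness.

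The main obstacle is justifying the interchanges of summation and integration and the holomorphic dependence of $\phi_{\mathbf a}$ on $\zeta$ \emph{without} any boundedness hypothesis on $\Omega$, which is the whole point of this version of the lemma. The nonnegativity of the summands lets me use monotone convergence for the norm identity safely, while the Cauchy-integral formula for the coefficients simultaneously delivers the holomorphicity in $\zeta$ and guarantees that the $z$-expansion is the true orthogonal decomposition rather than a merely formal series.
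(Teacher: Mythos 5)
Your argument is correct, but it follows a genuinely different route from the paper's. The paper never invokes Parseval on the slices: it exhausts $\Omega$ by relatively compact $n$-star-shaped subdomains $\Omega^k$, gets orthogonality of the terms $\phi_{\mathbf a}(\zeta)z^{\mathbf a}$ on each $\Omega^k$ by polar coordinates, deduces the Bessel-type bound $\|f\|^2_{A^2(\Omega)}\geq\sum_{\mathbf a}\|\phi_{\mathbf a}z^{\mathbf a}\|^2_{A^2(\Omega^k)}$ and lets $k\to\infty$ to obtain square-integrability; for completeness it expands $f=\sum c_{\mathbf a,\mathbf b}z^{\mathbf a}\phi_{\mathbf a,\mathbf b}$ and kills each $c_{\mathbf a,\mathbf b}$ by splitting $\int_{\Omega}=\int_{\Omega^k}+\int_{\Omega-\Omega^k}$, H\"older, and dominated convergence. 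You instead work slice by slice: Tonelli plus the Parseval identity on the complete Reinhardt slices $\Omega_{\zeta}$ gives part (1) in one stroke, and for part (2) you reduce $\langle f,\phi_{\mathbf a,\mathbf b}z^{\mathbf a}\rangle_{\Omega}$ to the weighted inner product $\langle\phi_{\mathbf a},\phi_{\mathbf a,\mathbf b}\rangle$ and invoke completeness of $\{\phi_{\mathbf a,\mathbf b}\}_{\mathbf b}$ there; this completeness argument is shorter and more transparent than the paper's. What your route costs is that you must actually justify that the monomials in $L^2(\Omega_{\zeta})$ give the orthogonal decomposition of $f(\cdot,\zeta)$ on a possibly unbounded Reinhardt slice, so that $\int_{\Omega_{\zeta}}f(z,\zeta)\bar z^{\mathbf a}\,dV(z)=\phi_{\mathbf a}(\zeta)\|z^{\mathbf a}\|^2_{\Omega_{\zeta}}$ (this follows from Bessel on an exhaustion of the slice plus identification of the $L^2$ limit, but it should be said), and that Fubini applies because both factors are in $L^2(\Omega)$ by Cauchy--Schwarz. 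Two small points you gloss over that the paper addresses: the degenerate case where the weight $\|z^{\mathbf a}\|^2_{\Omega_{\zeta}}$ is infinite, so the weighted space may be trivial (your argument survives, but note it, as the paper's remark does), and the closedness of $A^2(\pi(\Omega),\|z^{\mathbf a}\|^2_{\Omega_{\zeta}})$, which the paper proves via a mean-value estimate --- not needed for the lemma as stated, but needed to guarantee that the hypothesized complete orthogonal systems exist.
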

\begin{proof}
Let $\{\Omega^k\}$ denote a sequence of compact $n$-star-shaped Hartogs domains such that for all $k$,
the set $\Omega^k\subset\subset\Omega^{k+1}$ and $\bigcup^{\infty}\Omega^k=\Omega$.
Since $f(z,\zeta)=\sum_{\mathbf a}\phi_{\mathbf a}(\zeta)z^{\mathbf a}$ converges normally on $\Omega$, the series $\sum_{\mathbf a}\phi_{\mathbf a}(\zeta)z^{\mathbf a}$ is uniformly convergent on $\Omega^k$. Using polar coordinates, we see that $\phi_{\mathbf a}(\zeta)z^{\mathbf a}\perp\phi_{\mathbf b}(\zeta)z^{\mathbf b}$ in $A^2(\Omega^k)$ if $\mathbf a\neq\mathbf b$.

Thus, for all $k$ and $\phi_{\mathbf a}(\zeta)z^{\mathbf a}\in A^2(\Omega^k)$.
$$\|f(z,\zeta)\|^2_{A^2(\Omega)}\geq\sum_{\mathbf a}\|\phi_{\mathbf a}(\zeta)z^{\mathbf a}\|^2_{A^2(\Omega^k)}.$$
Therefore $\phi_{\mathbf a}(\zeta)z^{\mathbf a}$ is also square integrable on $\Omega$.  Since
\begin{align}
\|\phi_{\mathbf a}(\zeta)z^{\mathbf a}\|^2_{A^2(\Omega)}&=\int_{\pi(\Omega)}|\phi_{\mathbf a}(\zeta)|^2\int_{\Omega_{\zeta}}|z^{\mathbf a}|^2dV(z)\;dV(\zeta)\nonumber
\\&=\int_{\pi(\Omega)}|\phi_{\mathbf a}(\zeta)|^2\|z^{\mathbf a}\|^2_{\Omega_{\zeta}}dV(\zeta),
\end{align}
we have $\phi_{\mathbf a}\in A^2(\pi(\Omega),\|z^{\mathbf a}\|^2_{\Omega_{\zeta}})$. 
Note that $A^2(\pi(\Omega),\|z^{\mathbf a}\|^2_{\Omega_{\zeta}})$ inherits its completeness from $A^2(\Omega)$: Consider an arbitrary compact set $K\subseteq \pi(\Omega)$. Since $\Omega$ is $n$-star-shaped Hartogs, the compact set $\{0\}\times K$ is in $\Omega$. Thus there exists a constant $r_{K}>0$ such that for any point $(0,\zeta)\in\{0\}\times K$, the $(n+m)$-ball $B((0,\zeta);r_K)$ is contained in $\Omega$. Let $r=r_K/3$. Let $B_r^n$ denote the $n$-ball centered at the point $z_r=(\frac{r}{n},\dots,\frac{r}{n})$ with radius $\frac{r}{2n}$. For $\zeta\in K$, let $B_{\zeta}^m$ denote the $m$-ball centered at the $\zeta$ with radius $r$. Then we have $B_r^n\times B_{\zeta}^m\subseteq B((0,\zeta);r_K)\subseteq\Omega$. Let $g(\zeta)$ be an element of $A^2(\pi(\Omega),\|z^{\mathbf a}\|^2_{\Omega_{\zeta}})$. By the mean value property and H\"older inequality,
$$|g(\zeta)|=\Big|\frac{z^{\mathbf a}_rg(\zeta)}{z^{\mathbf a}_r}\Big|\leq\frac{\int_{B_r^n\times B_{\zeta}^m}|z^{\mathbf a}g(w)|dV(z,w)}{Vol(B_r^n\times B_{\zeta}^m)|z_r^{\mathbf a}|}\leq C_K\|g(\zeta)\|_{A^2(\pi(\Omega),\|z^{\mathbf a}\|^2_{\Omega_{\zeta}})}.$$
Taking the supremum of $|g(\zeta)|$ on $K$, we have
$$\sup\limits_{\zeta\in K}|g(\zeta)|\leq C_K\|g(\zeta)\|_{A^2(\pi(\Omega),\|z^{\mathbf a}\|^2_{\Omega_{\zeta}})}.$$
$L^2$ convergence in $A^2(\pi(\Omega),\|z^{\mathbf a}\|^2_{\Omega_{\zeta}})$ implies normal convergence in $A(\pi(\Omega))$, and hence $A^2(\pi(\Omega),\|z^{\mathbf a}\|^2_{\Omega_{\zeta}})$ is closed.

Let $\{\phi_{\mathbf a,\mathbf b}\}$ be a complete orthogonal system of $A^2(\pi(\Omega),\|z^{\mathbf a}\|^2_{\Omega_{\zeta}})$. We finish the proof by showing that $\{z^{\mathbf a}\phi_{\mathbf a,\mathbf b}(\zeta)\}$ forms a complete orthogonal system of $A^2(\Omega)$. For any $f\in A^2(\Omega)$,
$$f(z,\zeta)=\sum_{\mathbf a,\mathbf b}c_{\mathbf a,\mathbf b}z^{\mathbf a}\phi_{\mathbf a,\mathbf b}({\zeta}).$$
 To show the completeness, we assume $f\in A^2(\Omega)$ and  
$\int_{\Omega}f(z,\zeta)\bar{z}^{\mathbf a}\overline{\phi_{\mathbf a,\mathbf b}(\zeta)}dV=0$ for all $\mathbf a,\mathbf b$.  We verify that $f=0$.

Let $\{\Omega^k\}$ be the domains used above. For arbitrary $\mathbf a$ and $\mathbf b$,
\begin{eqnarray}
\int_{\Omega^k}f(z,\zeta)\bar{z}^{\mathbf a}\overline{\phi_{\mathbf a,\mathbf b}(\zeta)}dV+\int_{\Omega-\Omega^k}f(z,\zeta)\bar{z}^{\mathbf a}\overline{\phi_{\mathbf a,\mathbf b}(\zeta)}dV=0\nonumber.
\end{eqnarray}
Taking the absolute value for both terms, we have 
\begin{eqnarray}
\Big|\int_{\Omega^k}f(z,\zeta)\bar{z}^{\mathbf a}\overline{\phi_{\mathbf a,\mathbf b}(\zeta)}dV\Big|=\Big|\int_{\Omega-\Omega^k}f(z,\zeta)\bar{z}^{\mathbf a}\overline{\phi_{\mathbf a,\mathbf b}(\zeta)}dV\Big|\nonumber.
\end{eqnarray}
By H\"older's inequality
\begin{eqnarray}
\Big|\int_{\Omega-\Omega^k}f(z,\zeta)\bar{z}^{\mathbf a}\overline{\phi_{\mathbf a,\mathbf b}(\zeta)}dV\Big|\leq\|z^{\alpha}\phi_{\mathbf a,\mathbf b}(\zeta)\|_{A^2(\Omega)}\Big(\int_{\Omega-\Omega^k}|f|^2dV\Big)^{\frac{1}{2}}\nonumber
\end{eqnarray}
Since $f\in A^2(\Omega)$ and $\Omega^k$ exhausts $\Omega$,
\begin{eqnarray}
\lim\limits_{k\rightarrow\infty}\int_{\Omega-\Omega^k}|f|^2dV= 0\nonumber.
\end{eqnarray}
Therefore
\begin{eqnarray}
\lim\limits_{k\rightarrow\infty}\Big|\int_{\Omega^k}f(z,\zeta)\bar{z}^{\mathbf a}\overline{\phi_{\mathbf a,\mathbf b}(\zeta)}dV\Big|= 0\nonumber.
\end{eqnarray}
Using H\"older's inequality again gives $f(z,\zeta)\bar{z}^{\mathbf a}\overline{\phi_{\mathbf a,\mathbf b}(\zeta)}\in L^1(\Omega)$. The compactness of $\Omega^k$ and polar coordinates give
\begin{align}
&\Big|\int_{\Omega^k}f(z,\zeta)\bar{z}^{\mathbf a}\overline{\phi_{\mathbf a,\mathbf b}(\zeta)}dV\Big|\nonumber
\\=&\Big |\int_{\Omega^k}\sum_{\mathbf i,\mathbf j}c_{\mathbf i,\mathbf j}z^{\mathbf i}\phi_{\mathbf i,\mathbf j}({\zeta})\bar{z}^{\mathbf a}\overline{\phi_{\mathbf a,\mathbf b}(\zeta)}dV\Big|\nonumber
\\=&\Big|\int_{\Omega^k}\sum_{\mathbf j}c_{\mathbf a,\mathbf j}|{z}^{\mathbf a}|^2\phi_{\mathbf a,\mathbf j}({\zeta})\overline{\phi_{\mathbf a,\mathbf b}(\zeta)}dV\Big|\nonumber.
\end{align}
By the Dominated Convergence Theorem,
\begin{align}
&\lim\limits_{k\rightarrow\infty}\Big|\int_{\Omega^k}\sum_{\mathbf j}c_{\mathbf a,\mathbf j}|{z}^{\mathbf a}|^2\phi_{\mathbf a,\mathbf j}({\zeta})\overline{\phi_{\mathbf a,\mathbf b}(\zeta)}dV\Big|\nonumber
\\=&\Big|\int_{\Omega}\lim\limits_{k\rightarrow\infty}\chi_{\Omega^k}(z,\zeta)\sum_{\mathbf j}c_{\mathbf a,\mathbf j}|{z}^{\mathbf a}|^2\phi_{\mathbf a,\mathbf j}({\zeta})\overline{\phi_{\mathbf a,\mathbf b}(\zeta)}dV\Big|\nonumber
\\=&\Big|\int_{\Omega}\sum_{\mathbf j}c_{\mathbf a,\mathbf j}|{z}^{\mathbf a}|^2\phi_{\mathbf a,\mathbf j}({\zeta})\overline{\phi_{\mathbf a,\mathbf b}(\zeta)}dV\Big|\nonumber
\\=&\Big|\int_{\Omega}c_{\mathbf a,\mathbf b}|{z}^{\mathbf a}|^2\phi_{\mathbf a,\mathbf b}({\zeta})\overline{\phi_{\mathbf a,\mathbf b}(\zeta)}dV\Big|\nonumber
\\=&c_{\mathbf a,\mathbf b}\|{z}^{\mathbf a}\phi_{\mathbf a,\mathbf b}({\zeta})\|^2_{A^2(\Omega)}\nonumber.
\end{align}
Therefore $c_{\mathbf a,\mathbf b}=0$ for all $\mathbf a$, $\mathbf b$ and $f\equiv0$. 
\end{proof}
\begin{re*}
Since $\Omega$ is not necessarily bounded in Lemma 2.2, it is possible that the weighted space $A^2(\pi(\Omega),\|z^{\mathbf a}\|^2_{\Omega_{\zeta}})$ is trivial and $\phi_{\mathbf a,\mathbf b}=0$. Nevertheless, the second statement of Lemma 2.2 remains true.
\end{re*}
\begin{co}
	Let $\phi_{\mathbf a,\mathbf b}(\zeta)$ be a complete orthogonal system for $A^2(D,\|z^{\mathbf a}\|^2_{\Omega_{\zeta}})$. Then
\begin{equation}\label{7}
K_{\Omega}(z,z^{\prime};\bar\zeta,\bar{\zeta^{\prime}})=\sum_{\mathbf a,\mathbf b}\frac{(z\bar \zeta)^{\mathbf a}\phi_{\mathbf a,\mathbf b}(z^{\prime})\overline{\phi_{\mathbf a,\mathbf b}(\zeta^{\prime})}}{\|z^{\mathbf a}\phi_{\mathbf a,\mathbf b}(z^{\prime})\|^2_{L^2(\Omega)}}.
\end{equation}
\end{co}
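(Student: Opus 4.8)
The plan is to read off formula (\ref{7}) directly from Lemma 2.2 combined with the series representation (5) of the Bergman kernel in terms of a complete orthonormal system. By part (2) of Lemma 2.2, once $\{\phi_{\mathbf a,\mathbf b}\}$ is a complete orthogonal system for the weighted space $A^2(\pi(\Omega),\|z^{\mathbf a}\|^2_{\Omega_\zeta})$, the family $\{z^{\mathbf a}\phi_{\mathbf a,\mathbf b}(z')\}$ is a complete \emph{orthogonal} system for $A^2(\Omega)$. Since (5) is stated for an orthonormal system, the entire task is to normalize and then substitute.

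First I would record the squared norm of each basis element. Writing $(z,z')$ for the coordinates on $\mathbb C^n\times\mathbb C^m$, the element indexed by $(\mathbf a,\mathbf b)$ is the function $(z,z')\mapsto z^{\mathbf a}\phi_{\mathbf a,\mathbf b}(z')$, whose squared $A^2(\Omega)$-norm is $\|z^{\mathbf a}\phi_{\mathbf a,\mathbf b}(z')\|^2_{L^2(\Omega)}$; this is precisely the denominator appearing in (\ref{7}), and by the computation in the proof of Lemma 2.2 it equals $\int_{\pi(\Omega)}|\phi_{\mathbf a,\mathbf b}(\zeta)|^2\|z^{\mathbf a}\|^2_{\Omega_\zeta}\,dV(\zeta)$. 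Dividing each basis element by its norm produces the complete orthonormal system
\[
\psi_{\mathbf a,\mathbf b}(z,z')=\frac{z^{\mathbf a}\phi_{\mathbf a,\mathbf b}(z')}{\|z^{\mathbf a}\phi_{\mathbf a,\mathbf b}(z')\|_{L^2(\Omega)}}.
\]
Then I would apply (5) to $\{\psi_{\mathbf a,\mathbf b}\}$, obtaining
\[
K_\Omega(z,z';\bar\zeta,\bar{\zeta'})=\sum_{\mathbf a,\mathbf b}\psi_{\mathbf a,\mathbf b}(z,z')\overline{\psi_{\mathbf a,\mathbf b}(\zeta,\zeta')}=\sum_{\mathbf a,\mathbf b}\frac{(z\bar\zeta)^{\mathbf a}\phi_{\mathbf a,\mathbf b}(z')\overline{\phi_{\mathbf a,\mathbf b}(\zeta')}}{\|z^{\mathbf a}\phi_{\mathbf a,\mathbf b}(z')\|^2_{L^2(\Omega)}},
\]
where I use $z^{\mathbf a}\overline{\zeta^{\mathbf a}}=(z\bar\zeta)^{\mathbf a}$ and the fact that the two copies of the normalizing constant combine into the single squared norm in the denominator. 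This is exactly (\ref{7}).

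Since all of the analytic content — the expansion of $A^2(\Omega)$ functions, the square-integrability of each term, and above all the completeness of $\{z^{\mathbf a}\phi_{\mathbf a,\mathbf b}\}$ — is already established in Lemma 2.2, there is essentially no obstacle here: the corollary is purely a normalization and bookkeeping step. The only point worth confirming is that (5) genuinely applies to our system, but this is automatic, since (5) holds for \emph{any} complete orthonormal system of $A^2(\Omega)$ and rescaling an orthogonal system to unit norm preserves both orthogonality and completeness.
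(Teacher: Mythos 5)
Your argument is correct and is exactly the route the paper intends: the corollary is stated without proof as an immediate consequence of Lemma 2.2(2) together with the orthonormal-expansion formula (5), which is precisely the normalize-and-substitute step you carry out.
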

 When $m=0$, $\Omega$ becomes a Reinhardt domain containing the origin and (\ref{7}) becomes (\ref{1}).

\section{Main Results}
Let $\Omega\subseteq \mathbb C^{n+m}$ be an $n$-star-shaped Hartogs domain in the first $n$ variables $(z_1,\dots,z_n)$.
Our technique for computing the Bergman kernel function works on the following two kinds of domains:
\begin{itemize}
	\item
	$U^{\alpha}=\big\{(z,z^{\prime},w)\in \mathbb C^{n+m}\times \mathbb C:\big(f_{\alpha}(z,w),z^{\prime}\big)\in \Omega,|w|<1\big\}$
	\\where  $$f_{\alpha}(z,w)=\Big(\frac{z_1}{(1-|w|^2)^{\frac{\alpha_1}{2}}},\dots,\frac{z_n}{(1-|w|^2)^{\frac{\alpha_n}{2}}}\Big)$$
	and $\alpha_j$'s are positive numbers.
	\item
	$V^{\gamma}=\big\{(z,z^{\prime},w)\in \mathbb C^{n+m}\times \mathbb C:\big(g_{\gamma}(z,w),z^{\prime}\big)\in \Omega\big\}$
	\\where
	$$g_{\gamma}(z,w)=\Big(e^{\frac{\gamma_1|w|^{2}}{2}}z_1,\dots,e^{\frac{\gamma_n|w|^{2}}{2}}z_n\Big)$$
	and $\gamma_j$'s are positive numbers.
\end{itemize}	
\begin{re*} In our definition, we avoid the cases when all $\alpha_j$'s and $\gamma_j$'s equal 0 since they are not interesting. When $\alpha=\mathbf 0$, $U^{\mathbf 0}$ becomes $\Omega\times \mathbb B^1$ and $K_{U^{\mathbf 0}}$ equals the product of the Bergman kernels on $\Omega$ and the unit disk $\mathbb B^1$. When $\gamma=\mathbf 0$, $V^{\mathbf 0}=\Omega\times \mathbb C$. Since $A^2(V^{\mathbf 0})=\{0\}$, the kernel function $K_{V^{\mathbf 0}}$ is identically zero. These results are consistent with Theorems 1 and 2.
\end{re*}
Since $e^{|w|^2}$ and $(1-|w|^2)^{-1}$ are increasing in $|w|$ and invariant under the rotation map $w\mapsto e^{i\theta}w$ for $\theta\in \mathbb R$, the slice domains of $U^{\alpha}$ and $V^\gamma$ with $z$ and $z^{\prime}$ coordinates fixed are disks in $\mathbb C$. This observation yields the following:
\begin{lem} If $\Omega$ is n-star-shaped Hartogs in the variables $(z_1,\dots,z_n)$, then
 	$U^{\alpha}$ and $V^{\gamma}$ are $(n+1)$-star-shaped Hartogs in the variables $(z_1,\dots,z_n,w)$.
 \end{lem}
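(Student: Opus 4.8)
The plan is to verify Definition 2.3 directly for each of the two families. Fix a point $(z,z',w)$ in the domain in question and fix scalars $\lambda_1,\dots,\lambda_n$ and $\mu$ with $|\lambda_j|\le 1$ and $|\mu|\le 1$; I must show that the scaled point $(\lambda_1 z_1,\dots,\lambda_n z_n,\,z',\,\mu w)$ again lies in the domain. The guiding principle is the remark preceding the lemma: replacing $w$ by $\mu w$ only decreases $|w|$, and since $(1-|w|^2)^{-1}$ and $e^{|w|^2}$ are monotone in $|w|$, this change moves the defining coordinate maps $f_\alpha$ and $g_\gamma$ by per-coordinate factors that can be absorbed into the $n$-star-shaped property of the base domain $\Omega$.

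For $U^\alpha$, membership of $(z,z',w)$ gives $|w|<1$ and $(f_\alpha(z,w),z')\in\Omega$. Since $|\mu|\le 1$, we have $|\mu w|\le|w|<1$, so the constraint $|\mu w|<1$ is immediate; it remains to show $(f_\alpha(\lambda z,\mu w),z')\in\Omega$. The key step is to write the $j$-th coordinate of $f_\alpha(\lambda z,\mu w)$ as $c_j$ times the $j$-th coordinate of $f_\alpha(z,w)$, where $c_j=\lambda_j\big((1-|w|^2)/(1-|\mu w|^2)\big)^{\alpha_j/2}$; this identity holds coordinatewise (and trivially when $z_j=0$). Because $|\mu w|\le|w|$ gives $1-|\mu w|^2\ge 1-|w|^2>0$ and $\alpha_j>0$, each factor satisfies $|c_j|\le 1$. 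Applying Definition 2.3 to $(f_\alpha(z,w),z')\in\Omega$ then yields $(f_\alpha(\lambda z,\mu w),z')\in\Omega$, so the scaled point lies in $U^\alpha$.

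For $V^\gamma$ the argument runs in parallel and is slightly shorter, as there is no $|w|<1$ condition to verify. Here one writes the $j$-th coordinate of $g_\gamma(\lambda z,\mu w)$ as $d_j$ times the $j$-th coordinate of $g_\gamma(z,w)$ with $d_j=\lambda_j\exp\big(\gamma_j(|\mu w|^2-|w|^2)/2\big)$; since $|\mu w|^2\le|w|^2$ and $\gamma_j>0$, the exponent is nonpositive and $|d_j|\le 1$. The $n$-star-shaped property of $\Omega$, applied to $(g_\gamma(z,w),z')\in\Omega$, again delivers the scaled point in $V^\gamma$.

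I do not expect any genuine obstacle. The whole content is the sign computation showing that shrinking $|w|$ produces the per-coordinate factors $c_j$ and $d_j$ of modulus at most one, which is precisely where positivity of the $\alpha_j$ and $\gamma_j$ enters. The only mild care needed is bookkeeping: packaging the combined effect of the $\lambda_j$ and $\mu$ scalings into a single factor per coordinate, so that the hypothesis on $\Omega$ can be invoked in one stroke.
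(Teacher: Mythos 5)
Your proposal is correct and is precisely the argument the paper intends: the paper merely notes that $e^{|w|^2}$ and $(1-|w|^2)^{-1}$ are increasing in $|w|$ and rotation-invariant and declares the lemma to follow, while you carry out the same monotonicity observation in full, packaging the effect of the scalings $\lambda_j$ and $\mu$ into per-coordinate factors $c_j$, $d_j$ of modulus at most one and invoking the $n$-star-shaped property of $\Omega$. No gaps.
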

By Lemma 2.2, a complete orthogonal system of the form $\{z^{\mathbf a}\phi_{\mathbf a,\mathbf b,c}(z^{\prime})w^c\}$ can be chosen for $A^2(U^{\alpha})$ and $A^2(V^\gamma)$. The next lemma implies that $\{z^{\mathbf a}\phi_{\mathbf a,\mathbf b}(z^{\prime})w^c\}$ is a complete orthogonal system for both $A^2(U^{\alpha})$ and $A^2(V^\gamma)$ if $\{z^{\mathbf a}\phi_{\mathbf a,\mathbf b}(z^{\prime})\}$ is a complete orthogonal system for $A^2(\Omega)$.
\begin{lem}
 	The function $z^{\mathbf a}\phi({z^{\prime}})$ is square-integrable on $\Omega$ if and only if for all $c\in \mathbb N$, the function $z^{\mathbf a}\phi({z^{\prime}})w^c$ is square-integrable on $U^{\alpha}$ $\big(\text {or}\:\:V^{\gamma}\big)$.
 \end{lem}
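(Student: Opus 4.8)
The plan is to show that for each $c\in\mathbb N$ the $L^2(U^\alpha)$-norm (respectively the $L^2(V^\gamma)$-norm) of $z^{\mathbf a}\phi(z')w^c$ equals a fixed finite positive multiple of the $L^2(\Omega)$-norm of $z^{\mathbf a}\phi(z')$; the stated equivalence of square-integrability then follows at once. First I would apply Tonelli's theorem to the nonnegative integrand $|z^{\mathbf a}\phi(z')w^c|^2$, writing the integral over $U^\alpha$ as an integral over the $w$-disk $\{|w|<1\}$ of the integral over the slice $\{(z,z'):(f_\alpha(z,w),z')\in\Omega\}$. Since the integrand is nonnegative this is legitimate regardless of whether either side is finite, which is exactly what lets us deduce an ``if and only if'' rather than merely an inequality.

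On each slice, with $w$ fixed, I would make the change of variables $u=f_\alpha(z,w)$, that is $z_j=(1-|w|^2)^{\alpha_j/2}u_j$, a diagonal (hence biholomorphic) scaling of $\mathbb C^n$ carrying the slice onto $\Omega$ in the $(u,z')$ coordinates. The real Jacobian contributes $\prod_{j=1}^n(1-|w|^2)^{\alpha_j}=(1-|w|^2)^{\sum_j\alpha_j}$ to $dV(z)$, while $|z^{\mathbf a}|^2=|u^{\mathbf a}|^2(1-|w|^2)^{\sum_j\alpha_j a_j}$. Collecting exponents and setting $M=\sum_{j=1}^n\alpha_j(a_j+1)$, the slice integral equals $(1-|w|^2)^{M}\,\|z^{\mathbf a}\phi(z')\|_{L^2(\Omega)}^2$. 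Pulling the $\Omega$-norm out and integrating the residual factor $(1-|w|^2)^M|w|^{2c}$ over the disk gives, via polar coordinates and the substitution $s=r^2$, a Beta integral $\pi B(c+1,M+1)=\pi\,\Gamma(c+1)\Gamma(M+1)/\Gamma(c+M+2)$. Because $\alpha_j>0$ and not all vanish, $M>0>-1$, so this constant is finite and strictly positive, and the two norms are simultaneously finite or infinite.

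The argument for $V^\gamma$ is identical in structure. The change of variables $u=g_\gamma(z,w)$, i.e.\ $z_j=e^{-\gamma_j|w|^2/2}u_j$, now maps the slice (this time over all of $\mathbb C$) onto $\Omega$, producing a factor $e^{-N|w|^2}$ with $N=\sum_{j=1}^n\gamma_j(a_j+1)$. The leftover $w$-integral $\int_{\mathbb C}e^{-N|w|^2}|w|^{2c}\,dV(w)=\pi\,c!/N^{c+1}$ is a Gaussian integral that converges for every $c$ since $N>0$, again yielding a finite positive proportionality constant.

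There is no genuine obstacle here: the only points requiring care are the bookkeeping of the exponents of $(1-|w|^2)$ (respectively of the exponential), the observation that the slice scaling is a legitimate change of variables for each fixed $w$, and confirming convergence of the one-variable $w$-integral. The last of these is immediate from $\alpha_j,\gamma_j>0$, which forces the relevant exponent $M$ (respectively the Gaussian weight $N$) to be strictly positive.
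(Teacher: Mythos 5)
Your proposal is correct and follows essentially the same route as the paper: the slice change of variables $u=f_\alpha(z,w)$ (resp.\ $u=g_\gamma(z,w)$) together with Fubini/Tonelli factors the norm as $\|z^{\mathbf a}\phi(z')\|^2_{L^2(\Omega)}$ times the one-variable integral $\int_{\mathbb B^1}|w|^{2c}(1-|w|^2)^{\alpha\cdot(\mathbf a+\mathbf 1)}dV(w)$, which is exactly the paper's display (3.3). The only differences are cosmetic: you invoke Tonelli to justify the equivalence in both directions at once and evaluate the residual $w$-integral explicitly as a Beta (resp.\ Gaussian) integral to confirm it is finite and strictly positive, whereas the paper simply observes it is a constant.
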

 \begin{proof}
Suppose $z^{\mathbf a}\phi({z^{\prime}})w^c\in A^2(U^{\alpha})$. Then 
\begin{equation}\label{8}
\int_{U^\alpha}|z^{\mathbf a}|^2|\phi({z^{\prime}})|^2|w|^{2c}dV(z,z^\prime,w)=\|z^{\mathbf a}\phi({z^{\prime}})w^c\|^2_{L^2(U^{\alpha})}<\infty.
\end{equation}
Substituting $t_j={z_j}{(1-|w|^2)^{-\frac{\alpha_j}{2}}}$ for $1\leq j\leq n$ and applying Fubini's theorem to the integral in (\ref{8}) yield:
\begin{align}\label{a}
 &\int_{U^\alpha}|z^{\mathbf a}|^2|\phi({z^{\prime}})|^2|w|^{2c}dV(z,z^\prime,w)\nonumber
 \\=&\int_{\mathbb B^1}|w|^{2c}(1-|w|^2)^{\alpha\cdot(\mathbf a+\mathbf 1)}dV(w)\int_{\Omega}|t^{\mathbf a}|^2|\phi(z^{\prime})|^2dV(t,z^{\prime})
 \nonumber
  \\=&\int_{\mathbb B^1}|w|^{2c}(1-|w|^2)^{\alpha\cdot(\mathbf a+\mathbf 1)}dV(w)\big\|z^{\mathbf a}\phi({z^{\prime}})\big\|^2_{L^2(\Omega)}<\infty.
\end{align}
Since $\int_{\mathbb B^1}|w|^{2c}(1-|w|^2)^{\alpha\cdot(\mathbf a+\mathbf 1)}dV(w)$ is a constant, $\|z^{\mathbf a}\phi({z^{\prime}})\|^2_{L^2(\Omega)}<\infty$ and $z^{\mathbf a}\phi({z^{\prime}})$ is in $A^2(\Omega)$. By (\ref{a}), the converse is also true. A similar argument proves the statement for $V^{\gamma}$. We omit the details.
 \end{proof}
The definitions of $U^\alpha$ and $V^{\gamma}$ also imply that the slices of $U^\alpha$ and $V^{\gamma}$, with the $w$ coordinate fixed, are biholomorphic to $\Omega$.
  For fixed $w\in \mathbb B^1$ and $\eta\in \mathbb C$, let $U^{\alpha}_w$ denote the slice domain $\{(z,z^{\prime})\in \mathbb C^{n+m}:(z,z^{\prime},w)\in U^{\alpha}\}$ of $U^{\alpha}$ and let $V^{\gamma}_\eta$  denote the slice domain $\{(z,z^{\prime})\in \mathbb C^{n+m}:(z,z^{\prime},\eta)\in V^{\gamma}\}$ of $V^{\gamma}$. Applying the mappings $f_\alpha(\cdot,w)$ and $g_\gamma(\cdot,\eta)$ to $U^{\alpha}_w$ and $V^{\gamma}_\eta$ yields:
\begin{lem}
 $U^{\alpha}_w$ and $V^{\gamma}_\eta$ are biholomorphic to $\Omega$.
 \end{lem}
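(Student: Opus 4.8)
The plan is to exhibit explicit biholomorphisms, namely the very maps $f_\alpha(\cdot,w)$ and $g_\gamma(\cdot,\eta)$ already named just before the statement. The crucial observation is that although $(1-|w|^2)^{-\alpha_j/2}$ and $e^{\gamma_j|w|^2/2}$ depend on $w$ non-holomorphically through $|w|^2$, once $w$ (resp.\ $\eta$) is frozen these are simply nonzero positive numbers, so the resulting map is holomorphic---indeed linear---in the variables $(z,z')$ that matter.

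First I would introduce, for a fixed $w\in\mathbb B^1$, the map $F_w\colon\mathbb C^{n+m}\to\mathbb C^{n+m}$ given by
$$F_w(z,z')=\big(f_\alpha(z,w),z'\big)=\Big(\tfrac{z_1}{(1-|w|^2)^{\alpha_1/2}},\dots,\tfrac{z_n}{(1-|w|^2)^{\alpha_n/2}},z'\Big).$$
This is a diagonal rescaling of the first $n$ coordinates by nonzero constants together with the identity on $z'$; hence $F_w$ is a biholomorphism of $\mathbb C^{n+m}$ onto itself, with constant Jacobian determinant $\prod_{j=1}^n(1-|w|^2)^{-\alpha_j/2}\neq 0$ and holomorphic inverse given by the reciprocal scalings.

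Next I would read off the correspondence directly from the definition of the slice: by construction $(z,z')\in U^\alpha_w$ if and only if $(f_\alpha(z,w),z')\in\Omega$, that is, if and only if $F_w(z,z')\in\Omega$. Thus $F_w$ maps $U^\alpha_w$ into $\Omega$, and surjectivity onto $\Omega$ follows by applying the inverse scalings to an arbitrary $(t,z')\in\Omega$. Restricting the ambient biholomorphism $F_w$ to the open set $U^\alpha_w$ therefore yields a biholomorphism $U^\alpha_w\to\Omega$.

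Finally, the same argument with $G_\eta(z,z')=\big(g_\gamma(z,\eta),z'\big)$ handles $V^\gamma_\eta$: for fixed $\eta$ the factors $e^{\gamma_j|\eta|^2/2}$ are positive constants, so $G_\eta$ is again a diagonal biholomorphism of $\mathbb C^{n+m}$ carrying $V^\gamma_\eta$ onto $\Omega$. I do not expect any real obstacle here; the only point to keep straight is that the non-holomorphic dependence of the scaling factors on $|w|^2$ and $|\eta|^2$ is harmless once $w,\eta$ are fixed, since biholomorphy is needed only in the $(z,z')$ variables.
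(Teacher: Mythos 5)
Your proposal is correct and is exactly the argument the paper intends: the lemma is stated right after the sentence ``Applying the mappings $f_\alpha(\cdot,w)$ and $g_\gamma(\cdot,\eta)$ to $U^{\alpha}_w$ and $V^{\gamma}_\eta$ yields,'' so the paper's proof is precisely the observation that for frozen $w$ (resp.\ $\eta$) these maps are invertible diagonal linear scalings carrying the slice onto $\Omega$. You have simply written out the details the author left implicit; there is no gap and no difference in approach.
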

Computing the Levi form of $U^{\alpha}$ and $V^{\gamma}$ yields the following:
\begin{re*} For a smooth and pseudoconvex $\Omega$, both $U^{\alpha}$ and $V^{\gamma}$ are pseudoconvex.\end{re*}
As we will see in Sections 5 and 6, $U^{\alpha}$ and $V^{\gamma}$ may not be strongly pseudoconvex even if $\Omega$ is strongly pseudoconvex.

We denote by $K_{U^{\alpha}}$ and $K_{V^{\gamma}}$ the Bergman kernel functions on $U^{\alpha}$ and $V^{\gamma}$. We denote by $K_{U^{\alpha}_w}$ and $K_{V^{\gamma}_w}$ the Bergman kernel functions on $U^{\alpha}_w$ and $V^{\gamma}_w$. We obtain $K_{U^{\alpha}}$ and $K_{V^{\gamma}}$ from $K_{U^{\alpha}_w}$ and $K_{V^{\gamma}_w}$ by the following procedure as mentioned in the introduction:
\begin{itemize}
\item[\textbf {Step} {\bf 1.}]First we evaluate $K_{U^{\alpha}_w}$ and $K_{V^{\gamma}_w}$ at appropriate points (off the diagonal).
\item[\textbf {Step} {\bf 2.}]With further modification, we obtain Hermitian symmetric functions on ${U^{\alpha}}$ and ${V^{\gamma}}$. \item[\textbf {Step} {\bf 3.}]By applying a first order differential operator to the result in \textbf{Step} $\bf 2$, we obtain the Bergman kernel functions on the target domains. 
\end{itemize}
The modifying functions $h$ and $l$ for $K_{U^{\alpha}_{\eta}}$ and $K_{{V^{\gamma}_{\eta}}}$ in \textbf{Step $\mathbf 1$} are defined as follows:
\begin{align}
&h(z,w,\eta)=\Big(z_1(\frac{1-|\eta|^2}{1-w\bar \eta})^{\alpha_1},\dots,z_n(\frac{1-|\eta|^2}{1-w\bar \eta})^{\alpha_n}\Big),\tag{$\rm i$}\\
&l(z,w,\eta)=\Big(z_1e^{\gamma_1(w\bar{\eta}-|\eta|^2)},\dots,z_ne^{\gamma_n(w\bar{\eta}-|\eta|^2)}\Big).\tag{$\rm ii$}
\end{align}
The modification in \textbf{Step $\bf 2$} only involves multiplication. For simplicity, we let operators $D_{U^{\alpha}}$ and $D_{V^{\gamma}}$ denote the composition of the multiplication operator in {\textbf{Step $\bf 2$}} and the differential operator in {\textbf{Step $\bf 3$}} for $U^{\alpha}$ and $V^{\gamma}$.  $D_{U^{\alpha}}$ and $D_{V^{\gamma}}$ are defined by 
\begin{align}
&D_{U^{\alpha}}=\frac{(1-|\eta|^2)^{\alpha\cdot\mathbf 1}}{\pi(1-w\bar \eta)^{2+\alpha\cdot\mathbf 1}}\Big(I+\sum_{j=1}^{n}\alpha_j(I+z_j\frac{\partial}{\partial z_j})\Big),\tag{$*$}\\
&D_{V^{\gamma}}=\frac{e^{(\gamma\cdot\mathbf 1)(w\bar{\eta}-|\eta|^{2})}}{\pi}\Big(\sum_{j=1}^{n}\gamma_j(I +z_j\frac{\partial}{\partial z_j})\Big).\tag{$**$}
\end{align}
Here are the main results:
\begin{thm}Let $U^{\alpha}$ and $U^{\alpha}_\eta$ be defined as above. For $(z,z^{\prime},w;\zeta,\zeta^{\prime},\eta)\in U^{\alpha}\times U^{\alpha}$, let $h(z,w,\eta)$ and $D_{U^{\alpha}}$ be as in $(\rm i)$ and $(*)$. Then
	\begin{equation}\label{d}
	K_{U^{\alpha}}(z,z^{\prime},w;\bar \zeta,\bar \zeta^{\prime},\bar \eta)=D_{U^{\alpha}}K_{U^{\alpha}_{\eta}}\big(h(z,w,\eta),z^{\prime};\bar\zeta,\bar{\zeta^{\prime}}\big).
	\end{equation}
\end{thm}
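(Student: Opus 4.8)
The plan is to show that both sides of (\ref{d}) are one and the same explicit power series, the one furnished by Corollary 2.1. By Lemmas 3.1 and 3.2, whenever $\{z^{\mathbf a}\phi_{\mathbf a,\mathbf b}(z')\}$ is a complete orthogonal system for $A^2(\Omega)$, the family $\{z^{\mathbf a}\phi_{\mathbf a,\mathbf b}(z')w^c\}$ is one for $A^2(U^{\alpha})$, so (\ref{7}) expands $K_{U^{\alpha}}$ into a triple sum over $\mathbf a,\mathbf b,c$. The first computational step is to evaluate the norms in the denominators. The substitution $t_j=z_j(1-|w|^2)^{-\alpha_j/2}$ already performed in (\ref{a}) separates off the $w$-integral, which in polar coordinates is a Beta integral; writing $s=\alpha\cdot(\mathbf a+\mathbf 1)$ for brevity this gives
$$\|z^{\mathbf a}\phi_{\mathbf a,\mathbf b}(z')w^c\|^2_{L^2(U^{\alpha})}=\pi\,\frac{c!\,\Gamma(s+1)}{\Gamma(c+s+2)}\,\|z^{\mathbf a}\phi_{\mathbf a,\mathbf b}(z')\|^2_{L^2(\Omega)}.$$

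Next I would carry out the sum over $c$. Using the generalized binomial series $\sum_{c\ge 0}\frac{\Gamma(c+s+2)}{c!\,\Gamma(s+2)}x^c=(1-x)^{-(s+2)}$ with $x=w\bar\eta$, the inner sum collapses and produces the factor $\frac{s+1}{(1-w\bar\eta)^{s+2}}$, so that
$$K_{U^{\alpha}}=\frac1\pi\sum_{\mathbf a,\mathbf b}\frac{(s+1)\,(z\bar\zeta)^{\mathbf a}\,\phi_{\mathbf a,\mathbf b}(z')\,\overline{\phi_{\mathbf a,\mathbf b}(\zeta')}}{(1-w\bar\eta)^{\,s+2}\,\|z^{\mathbf a}\phi_{\mathbf a,\mathbf b}(z')\|^2_{L^2(\Omega)}}.$$
For the right-hand side I would start from Lemma 3.3: the map $f_{\alpha}(\cdot,\eta)$ is a biholomorphism $U^{\alpha}_{\eta}\to\Omega$ with constant Jacobian $(1-|\eta|^2)^{-\frac12\alpha\cdot\mathbf 1}$, so the transformation rule (\ref{4}) writes $K_{U^{\alpha}_{\eta}}$ as $(1-|\eta|^2)^{-\alpha\cdot\mathbf 1}$ times the series for $K_{\Omega}$ composed with $f_{\alpha}$. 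Substituting $h(z,w,\eta)$ for $z$ and collecting exponents, the powers $(1-|\eta|^2)^{\pm\alpha\cdot\mathbf a}$ cancel and the monomial becomes $(z\bar\zeta)^{\mathbf a}(1-w\bar\eta)^{-\alpha\cdot\mathbf a}$. The differential part $I+\sum_j\alpha_j(I+z_j\frac{\partial}{\partial z_j})$ of $D_{U^{\alpha}}$ then acts on each such term simply as multiplication by $1+\alpha\cdot(\mathbf a+\mathbf 1)=1+s$, because $z_j\frac{\partial}{\partial z_j}$ reads off the degree $a_j$ while the $(1-w\bar\eta)$-factors are constants in $z$. Multiplying by the prefactor $\frac{(1-|\eta|^2)^{\alpha\cdot\mathbf 1}}{\pi(1-w\bar\eta)^{2+\alpha\cdot\mathbf 1}}$ cancels the last $(1-|\eta|^2)$ powers and combines the $(1-w\bar\eta)$ exponents into $s+2$, reproducing exactly the displayed series. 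A term-by-term comparison then gives (\ref{d}).

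The routine but delicate part is the Gamma-function bookkeeping: one must check that the disk integral yields precisely $\frac{c!\,\Gamma(s+1)}{\Gamma(c+s+2)}$, and, more importantly, that after summation the surviving factor $s+1=\alpha\cdot(\mathbf a+\mathbf 1)+1$ coincides with the eigenvalue produced by the differential operator and that the exponent of $(1-w\bar\eta)$ lands on exactly $s+2$. This matching of the $1+s$ coming from $D_{U^{\alpha}}$ with the $s+1$ emerging from the binomial sum is the crux of the argument and is what pins down the specific form of the operator $(*)$ and the modifying function $h$. Finally, since the defining series converges normally by Corollary 2.1, term-by-term differentiation is legitimate on compact subsets, and the convergence of the substituted series on $U^{\alpha}\times U^{\alpha}$ is inherited from that of $K_{U^{\alpha}}$; hence no analytic obstruction interferes with applying $D_{U^{\alpha}}$ inside the sum.
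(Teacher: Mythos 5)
Your proposal is correct, and the computations check out: the disk integral is indeed $\pi\,c!\,\Gamma(s+1)/\Gamma(c+s+2)$ with $s=\alpha\cdot(\mathbf a+\mathbf 1)$, the binomial resummation over $c$ produces $(s+1)(1-w\bar\eta)^{-(s+2)}$, and on the right-hand side the substitution $h$ combined with the transformation rule turns each basis term into an eigenvector of $I+\sum_j\alpha_j(I+z_j\frac{\partial}{\partial z_j})$ with eigenvalue $1+s$, so the two series coincide. However, your route differs from the paper's. You compute \emph{both} sides in closed form as orthogonal series (via Corollary 2.1 together with Lemmas 3.1 and 3.2) and match them term by term; the paper never sums the series over $c$. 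Instead it shows the right-hand side has an expansion of the form $\sum c_{\mathbf a,\mathbf b,c}(z\bar\zeta)^{\mathbf a}\phi_{\mathbf a,\mathbf b}(z^{\prime})\overline{\phi_{\mathbf a,\mathbf b}(\zeta^{\prime})}(w\bar\eta)^c$ and then verifies the reproducing property of Lemma 2.1 directly, by integrating against each basis element $\zeta^{\mathbf a}\phi_{\mathbf a,\mathbf b}(\zeta^{\prime})\eta^c$, invoking the reproducing property of the slice kernel $K_{U^{\alpha}_{\eta}}$, and evaluating the leftover disk integral --- which is the same Beta integral you use to compute the norms, just deployed at a different stage. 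Your version is more self-contained for this particular theorem and makes visible \emph{why} the operator $(*)$ and the function $h$ have the form they do (the eigenvalue $1+s$ must cancel the $(s+1)^{-1}$ hidden in the norms); the paper's reproducing-property template has the advantage of transferring verbatim to Theorem 2 and to the higher-dimensional Theorems 5 and 6, where a closed-form resummation in $w$ would be messier. Two small points you should still address explicitly: first, that $(h(z,w,\eta),z^{\prime})\in U^{\alpha}_{\eta}$, which the paper checks via $|1-w\bar\eta|^2\geq(1-|w|^2)(1-|\eta|^2)$ and the star-shaped property, so that the right-hand side of (\ref{d}) is a genuine function evaluation and not merely an analytic continuation of your series; second, that regrouping the absolutely (normally) convergent triple sum to perform the $c$-summation first is legitimate, which you gesture at but should state.
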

\begin{thm}
Let $V^{\gamma}$ and $V^{\gamma}_{\eta}$ be defined as above. For $(z,z^{\prime},w;\zeta,\zeta^{\prime},\eta) \in V^{\gamma}\times V^{\gamma}$, let  $l(z,w,\eta)$ and $D_{V^{\gamma}}$ be as in $(\rm ii)$ and $(**)$. Then
		\begin{equation}\label{9}
		K_{V^{\gamma}}(z,z^{\prime},w;\bar \zeta,\bar \zeta^{\prime},\bar \eta)=D_{V^{\gamma}}K_{V^{\gamma}_{\eta}}\big(l(z,w,\eta),z^{\prime};\bar\zeta,\bar{\zeta^{\prime}}\big).
		\end{equation}
\end{thm}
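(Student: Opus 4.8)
The plan is to compute both sides of (\ref{9}) as explicit series in the same complete orthogonal system and check that they agree term by term. I would fix a complete orthogonal system $\{z^{\mathbf a}\phi_{\mathbf a,\mathbf b}(z')\}$ for $A^2(\Omega)$. By Lemma 3.1 the domain $V^{\gamma}$ is $(n+1)$-star-shaped Hartogs in $(z_1,\dots,z_n,w)$, so by Lemma 3.2 the enlarged system $\{z^{\mathbf a}\phi_{\mathbf a,\mathbf b}(z')w^c\}$ is a complete orthogonal system for $A^2(V^{\gamma})$, and Corollary 2.1 reduces the evaluation of $K_{V^{\gamma}}$ to computing the norms $\|z^{\mathbf a}\phi_{\mathbf a,\mathbf b}(z')w^c\|^2_{L^2(V^{\gamma})}$.

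For the left-hand side I would run the substitution $t_j=e^{\gamma_j|w|^2/2}z_j$ exactly as in the proof of Lemma 3.2 to factor each norm as a Gaussian $w$-integral times $\|z^{\mathbf a}\phi_{\mathbf a,\mathbf b}(z')\|^2_{L^2(\Omega)}$. Writing $\beta_{\mathbf a}=\gamma\cdot(\mathbf a+\mathbf 1)$, the integral $\int_{\mathbb C}|w|^{2c}e^{-\beta_{\mathbf a}|w|^2}dV(w)=\pi c!\,\beta_{\mathbf a}^{-(c+1)}$ is elementary. Substituting into (\ref{7}) and summing in $c$ via $\sum_c(\beta_{\mathbf a}w\bar\eta)^c/c!=e^{\beta_{\mathbf a}w\bar\eta}$ collapses the $c$-sum and yields
\[
K_{V^{\gamma}}=\frac{1}{\pi}\sum_{\mathbf a,\mathbf b}\frac{\beta_{\mathbf a}\,e^{\beta_{\mathbf a}w\bar\eta}(z\bar\zeta)^{\mathbf a}\phi_{\mathbf a,\mathbf b}(z')\overline{\phi_{\mathbf a,\mathbf b}(\zeta')}}{\|z^{\mathbf a}\phi_{\mathbf a,\mathbf b}(z')\|^2_{L^2(\Omega)}}.
\]

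For the right-hand side I would first invoke Lemma 3.3: the map $(z,z')\mapsto(g_{\gamma}(z,\eta),z')$ is a biholomorphism $V^{\gamma}_{\eta}\to\Omega$ with constant Jacobian $e^{(\gamma\cdot\mathbf 1)|\eta|^2/2}$, so the transformation rule (\ref{4}) expresses $K_{V^{\gamma}_{\eta}}$ through $K_{\Omega}$, which I expand in the same orthogonal system. Substituting the point $l(z,w,\eta)$ of $(\mathrm{ii})$ replaces $(z\bar\zeta)^{\mathbf a}$ by $e^{(\gamma\cdot\mathbf a)(w\bar\eta-|\eta|^2)}(z\bar\zeta)^{\mathbf a}$; combining this with the two Jacobian factors and the $e^{(\gamma\cdot\mathbf a)|\eta|^2}$ produced by $g_{\gamma}$, the $|\eta|^2$ contributions inside each $\mathbf a$-term cancel. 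The bracketed operator in $(**)$ then acts on the monomial by $\sum_j\gamma_j(I+z_j\frac{\partial}{\partial z_j})(z\bar\zeta)^{\mathbf a}=\beta_{\mathbf a}(z\bar\zeta)^{\mathbf a}$ (treating the $\mathbf a$-dependent exponential as a $z$-constant), and finally the scalar prefactor $\pi^{-1}e^{(\gamma\cdot\mathbf 1)(w\bar\eta-|\eta|^2)}$ absorbs the remaining exponentials into $e^{\beta_{\mathbf a}w\bar\eta}$, reproducing the displayed series term by term.

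The routine parts are the Gaussian integral and the monomial bookkeeping; the step carrying the real content is recognizing that the operator in $(**)$ is engineered precisely to reproduce the scalar factor $\beta_{\mathbf a}$ that the summation over $c$ forces onto each term of $K_{V^{\gamma}}$, while its exponential prefactor is tuned so that every $|\eta|^2$ contribution cancels and the $w\bar\eta$ contributions assemble into the single exponential $e^{\beta_{\mathbf a}w\bar\eta}$. The main obstacle I anticipate is the careful tracking of these several exponential factors—coming separately from the biholomorphism, from $l$, and from the prefactor of $D_{V^{\gamma}}$—so that they combine correctly; a secondary point is justifying the termwise application of the first order differential operator to the normally convergent series, which follows from the normal convergence of the kernel expansion on $V^{\gamma}\times V^{\gamma}$.
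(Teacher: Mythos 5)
Your computation is correct, and both sides do collapse to the series
\[
\frac{1}{\pi}\sum_{\mathbf a,\mathbf b}\frac{\gamma\cdot(\mathbf a+\mathbf 1)\,e^{\gamma\cdot(\mathbf a+\mathbf 1)w\bar\eta}\,(z\bar\zeta)^{\mathbf a}\phi_{\mathbf a,\mathbf b}(z^{\prime})\overline{\phi_{\mathbf a,\mathbf b}(\zeta^{\prime})}}{\|z^{\mathbf a}\phi_{\mathbf a,\mathbf b}(z^{\prime})\|^2_{L^2(\Omega)}},
\]
but your organization of the argument differs from the paper's. The paper never computes $K_{V^{\gamma}}$ as a closed series: it takes the right-hand side of (\ref{9}) as a candidate kernel, checks that it is defined on $V^{\gamma}\times V^{\gamma}$ and has an expansion in the monomials $(z\bar\zeta)^{\mathbf a}\phi_{\mathbf a,\mathbf b}(z^{\prime})\overline{\phi_{\mathbf a,\mathbf b}(\zeta^{\prime})}(w\bar\eta)^c$, and then verifies the reproducing property against each basis element $z^{\mathbf a}\phi_{\mathbf a,\mathbf b}(z^{\prime})w^c$ by first applying the reproducing property of $K_{V^{\gamma}_{\eta}}$ on the slice and then evaluating the Gaussian integral $\int_{\mathbb C}e^{\beta w\bar\eta}\eta^{c}e^{-\beta|\eta|^2}dV(\eta)$; Lemma 2.1 then identifies the candidate as the kernel. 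Your route instead evaluates the norms $\|z^{\mathbf a}\phi_{\mathbf a,\mathbf b}(z^{\prime})w^c\|^2_{L^2(V^{\gamma})}$ to get the left-hand side from Corollary 2.1 and resums in $c$, and separately expands the right-hand side via the transformation rule and the substitution $l$ — the two are essentially dual uses of the same Gaussian integral $\int_{\mathbb C}|\eta|^{2c}e^{-\beta|\eta|^2}dV(\eta)=\pi c!\,\beta^{-(c+1)}$, one appearing as a norm and one inside a reproducing integral. What your version buys is an explicit closed series for $K_{V^{\gamma}}$ (which is in effect how the paper later produces formula (\ref{24})); what the paper's version buys is that one never needs to justify resummation of the kernel series, only termwise reproduction. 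Two small points you should make explicit to match the paper's level of care: the verification that $(l(z,w,\eta),z^{\prime})\in V^{\gamma}_{\eta}$ for all $(z,z^{\prime},w),(\zeta,\zeta^{\prime},\eta)\in V^{\gamma}$ (it follows from $\mathrm{Re}(w\bar\eta)\le \tfrac{1}{2}(|w|^2+|\eta|^2)$ and the star-shaped property of $\Omega$), without which the series for $K_{\Omega}$ at the substituted point need not converge; and the appeal to Lemmas 3.1 and 3.2 guaranteeing that $\{z^{\mathbf a}\phi_{\mathbf a,\mathbf b}(z^{\prime})w^c\}$ is indeed complete in $A^2(V^{\gamma})$, which you do cite.
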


Theorems 1 and 2 are proved in a similar way. 
We illustrate using Theorem 1. We first show that the function on the right hand side of (\ref{d}) is defined on $U^{\alpha}\times U^{\alpha}$. Then we prove, for the complete orthogonal system $\{z^{\mathbf a}\phi_{\mathbf a,\mathbf b}(z^{\prime})\}$ of $A^2(\Omega)$, that the function has the following expansion:
\begin{equation}\label{10}
\sum_{\mathbf a,\mathbf b,c}c_{\mathbf a,\mathbf b,c}(z\bar \zeta)^{\mathbf a}\phi_{\mathbf a,\mathbf b}(z^{\prime})\overline{\phi_{\mathbf a,\mathbf b}(\zeta^{\prime})}(w\bar{\eta})^c.
\end{equation}
By showing that (\ref{10}) reproduces every element in $A^2(U^{\alpha})$, we conclude that the equality in (\ref{d}) holds and our proof is complete.
In the proof, we let $\Gamma$ denote the gamma function and let $(a)_b$  denote the Pochhammer symbol $\frac{\Gamma(a+b)}{\Gamma(a)}$.
\begin{proof}[Proof of Theorem 1]
Let $K_1(z,z^{\prime},w;\bar \zeta,\bar{\zeta^{\prime}},\bar \eta)$ denote
\begin{equation}
D_{U^{\alpha}}K_{U^{\alpha}_{\eta}}\big(h(z,w,\eta),z^{\prime};\bar\zeta,\bar{\zeta^{\prime}}\big).\end{equation}
We first show $K_1$ is defined on $U^{\alpha}\times U^{\alpha}$, i.e. $(\zeta,\zeta^{\prime})\in U^{\alpha}_\eta$ and $(h(z,w,\eta),z^{\prime})\in U^{\alpha}_{\eta}$. 
The definition of $U^{\alpha}_\eta$ implies that $(\zeta,\zeta^{\prime})\in U^{\alpha}_\eta$. To prove $(h(z,w,\eta),z^{\prime})\in U^{\alpha}_{\eta}$, it suffices to show $(f_\alpha(h(z,w,\eta),\eta),z^{\prime})\in \Omega$. Note that 
$$\Big(f_\alpha\big(h(z,w,\eta),\eta\big),z^{\prime}\Big)=\Big(\frac{(1-|\eta|^2)^{\frac{\alpha_1}{2}}}{(1-w\bar{\eta})^{\alpha_1}}z_1,\dots,\frac{(1-|\eta|^2)^{\frac{\alpha_n}{2}}}{(1-w\bar{\eta})^{\alpha_n}} z_n,z^{\prime}\Big),$$
and for each $0\leq j\leq n$,
$$\Big|\frac{(1-|\eta|^2)^{\frac{\alpha_j}{2}}}{(1-w\bar{\eta})^{\alpha_j}}z_j\Big|\leq\frac{|z_j|}{(1-|w|^2)^{\frac{\alpha_j}{2}}}.$$
By the fact that $\Omega$ is $n$-star-shaped Hartogs in the first $n$ variables, the containment  $(f_{\alpha}(z,w),z^{\prime})\in \Omega$ implies $(f_\alpha(h(z,w,\eta),\eta),z^{\prime})\in \Omega$.
Therefore $(\zeta,\zeta^{\prime})\in U^{\alpha}_\eta$ and $K_1$ is defined on $U^{\alpha}\times U^{\alpha}$.

\:\:
To show $K_1$ satisfies (\ref{10}), we consider the biholomorphic map $f_{\alpha}(\cdot,\eta)$ from $U^{\alpha}_\eta$ to $\Omega$:
$$f_{\alpha}(z,\eta)=((1-|\eta|^2)^{-\frac{\alpha_1}{2}}z_1,\dots,(1-|\eta|^2)^{-\frac{\alpha_n}{2}}z_n,z^{\prime}).$$
 By (\ref{4}), we have:
\begin{equation}\label{12}
K_{U^{\alpha}_\eta}(z,z^{\prime},w;\bar{\zeta},\bar{\zeta^{\prime}},\bar{\eta})=(1-|\eta|^2)^{-\alpha\cdot \mathbf 1}K_{\Omega}\big(f_{\alpha}(z,\eta),z^{\prime};\overline {f_{\alpha}(\zeta,\eta)},\bar{\zeta^{\prime}}\big).
\end{equation}
Therefore
\begin{equation}\label{13}
K_{1}(z,z^{\prime},w;\bar{\zeta},\bar{\zeta^{\prime}},\bar{\eta})=D_1K_{\Omega}\big(A_1(z,w,\eta),z^{\prime};\overline {f_{\alpha}(\zeta,\eta)},\bar{\zeta^{\prime}}\big),
\end{equation}
where
\begin{align*}
&D_{1}={\pi^{-1}(1-w\bar \eta)^{-(2+\alpha\cdot\mathbf 1)}}\Big(I+\sum_{j=1}^{n}\alpha_j\Big(I+z_j\frac{\partial}{\partial z_j}\Big)\Big),\\
&A_1(z,w,\eta)=\Big(z_1\Big(\frac{\sqrt{1-|\eta|^2}}{1-w\bar \eta}\Big)^{\alpha_1},\dots,z_n\Big(\frac{\sqrt{1-|\eta|^2}}{1-w\bar \eta}\Big)^{\alpha_n}\Big).
\end{align*}
Applying (\ref{7}) to $K_{\Omega}$ yields
\begin{equation}
K_{\Omega}\big(A_1(z,w,\eta),z^{\prime};\overline {f_{\alpha}(\zeta,\eta)},\bar{\zeta^{\prime}}\big)=\sum_{\mathbf a,\mathbf b}\frac{(z\bar \zeta)^{\mathbf a}\phi_{\mathbf a,\mathbf b}(z^{\prime})\overline{\phi_{\mathbf a,\mathbf b}(\zeta^{\prime})}}{(1-w\bar\eta)^{\mathbf a \cdot \alpha}\|z^{\mathbf a}\phi_{\mathbf a,\mathbf b}(z^{\prime})\|^2_{L^2(\Omega)}}\nonumber
.
\end{equation}
Thus, $K_{1}(z,z^{\prime},w;\bar{\zeta},\bar{\zeta^{\prime}},\bar{\eta})$ can be written as
\begin{equation*}
\sum_{\mathbf a,\mathbf b}c_{\mathbf a,\mathbf b,c}{(z\bar \zeta)^{\mathbf a}\phi_{\mathbf a,\mathbf b}(z^{\prime})\overline{\phi_{\mathbf a,\mathbf b}(\zeta^{\prime})}}(w\bar{\eta})^c.
\end{equation*}

We complete the proof by showing that $K_1$ reproduces every element in $A^2(U^{\alpha})$. For arbitrary $z^\mathbf a\phi_{\mathbf a,\mathbf b}(z^\prime)w^c\in A^2(U^{\alpha})$, we consider the integral:
\begin{align}\label{i}
&\int_{U^{\alpha}}K_1(z,z^{\prime},w;\bar{\zeta},\bar{\zeta^{\prime}},\bar{\eta})\zeta^\mathbf a\phi_{\mathbf a,\mathbf b}(\zeta^\prime)\eta^cdV.
\end{align}
By the definitions of $K_1$ and $U^{\alpha}$, (\ref{i}) equals
\begin{align}\label{14}
&\int_{\mathbb B^1}\eta^c\int_{U^{\alpha}_\eta}D_{U^\alpha}K_{U^{\alpha}_{\eta}}\big(h(z,w,\eta),z^{\prime};\bar{\zeta};\bar{\zeta^{\prime}}\big)\zeta^\mathbf a\phi_{\mathbf a,\mathbf b}(\zeta^\prime)dV(\zeta,\zeta^{\prime})dV(\eta).
\end{align}
Using the reproducing property of $K_{U^{\alpha}_\eta}$ on $U^{\alpha}_\eta$ and Corollary 2.1, we have
\begin{align}
&\int_{U^{\alpha}_\eta}D_{U^\alpha}K_{U^{\alpha}_{\eta}}(h(z,w,\eta),z^{\prime};\bar{\zeta},\bar{\zeta^{\prime}})\zeta^\mathbf a\phi_{\mathbf a,\mathbf b}(\zeta^\prime)dV(\zeta,\zeta^{\prime})\nonumber
\\=&(1+\alpha\cdot(\mathbf a +\mathbf 1))\frac{(1-|\eta|^2)^{\alpha\cdot\mathbf 1}}{\pi(1-w\bar\eta)^{2+\alpha\cdot\mathbf 1}}h(z,w,\eta)^{\mathbf a}\phi_{\mathbf a,
\mathbf b}(z^{\prime}).
\end{align}
Therefore (\ref{14}) becomes
\begin{equation}\label{16}
\big(1+\alpha\cdot(\mathbf a +\mathbf 1)\big)\phi_{\mathbf a,\mathbf b}(z^\prime)\int_{\mathbb B^1}\frac{(1-|\eta|^2)^{\alpha\cdot\mathbf 1}\eta^ch(z,w,\eta)^{\mathbf a}}{\pi(1-w\bar\eta)^{2+\alpha\cdot \mathbf 1}}dV(\eta).
\end{equation}
Since
$h(z,w,\eta)=(z_1(\frac{1-|\eta|^2}{1-w\bar \eta})^{\alpha_1},\dots,z_n(\frac{1-|\eta|^2}{1-w\bar \eta})^{\alpha_n})$, (\ref{16}) equals
\begin{equation}\label{17}
\big(1+\alpha\cdot(\mathbf a +\mathbf 1)\big)z^{\mathbf a}\phi_{\mathbf a,\mathbf b}(z^\prime)\int_{\mathbb B^1}\frac{(1-|\eta|^2)^{\alpha\cdot(\mathbf a+\mathbf 1)}\eta^c}{\pi(1-w\bar\eta)^{2+\alpha\cdot (\mathbf a+\mathbf 1)}}dV(\eta).
\end{equation}
Expanding the denominator in (\ref{17}) yields
\begin{align}\label{h}
(\ref{17})=&z^{\mathbf a}\phi_{\mathbf a,\mathbf b}(z^\prime)\int_{\mathbb B^1}\sum_{j=0}^{\infty}\frac{(1+\alpha\cdot (\mathbf a+\mathbf 1))_{j+1}(1-|\eta|^2)^{\alpha\cdot(\mathbf a+\mathbf 1)}(w\bar{\eta})^j}{\pi j!}\eta^cdV(\eta)\nonumber
\\=&z^{\mathbf a}\phi_{\mathbf a,\mathbf b}(z^\prime)w^c\int_{\mathbb B^1}\frac{(1+\alpha\cdot (\mathbf a+\mathbf 1))_{c+1}(1-|\eta|^2)^{\alpha\cdot(\mathbf a+\mathbf 1)}|{\eta}|^{2c}}{\pi c!}dV(\eta).
\end{align}
By substituting $r=|\eta|^2$ to the last line of (\ref{h}), we obtain
\begin{align}
&z^{\mathbf a}\phi_{\mathbf a,\mathbf b}(z^\prime)w^c\int_{0}^{1}\frac{\big(1+\alpha\cdot (\mathbf a+\mathbf 1)\big)_{c+1}(1-r)^{\alpha\cdot(\mathbf a+\mathbf 1)}r^{c}}{ c!}dr\nonumber
\\=&z^{\mathbf a}\phi_{\mathbf a,\mathbf b}(z^\prime)w^c\frac{\big(1+\alpha\cdot (\mathbf a+\mathbf 1)\big)_{c+1}}{ c!}\frac{\Gamma\big({1+\alpha\cdot(\mathbf a+\mathbf 1)\big)\Gamma{(c+1)}}}{\Gamma\big({2+\alpha\cdot(\mathbf a+\mathbf 1)+c}\big)}\nonumber
\\=&z^{\mathbf a}\phi_{\mathbf a,\mathbf b}(z^\prime)w^c\nonumber.
\end{align}
Thus the reproducing property holds, and $K_1$ is the Bergman kernel.
\end{proof}	
\begin{proof}[Proof of Theorem 2]
Let $K_2$ denote the right hand side of (\ref{9}). By the same argument in the proof of Theorem 1, we can prove that $K_2$ is defined on $V^{\gamma}\times V^{\gamma}$ and can be written as:
\begin{equation*}
\sum_{\mathbf a,\mathbf b}c_{\mathbf a,\mathbf b,c}{(z\bar \zeta)^{\mathbf a}\phi_{\mathbf a,\mathbf b}(z^{\prime})\overline{\phi_{\mathbf a,\mathbf b}(\zeta^{\prime})}}(w\bar{\eta})^c.
\end{equation*}
We show that $K_2$ reproduces every element in $A^2(V^{\gamma})$. For $z^\mathbf a\phi_{\mathbf a,\mathbf b}(z^\prime)w^c\in A^2(V^{\gamma})$, we consider the integral
\begin{align}\label{g}
&\int_{V^\gamma}K_2(z,z^{\prime},w;\bar{\zeta},\bar{\zeta^{\prime}},\bar{\eta})\zeta^\mathbf a\phi_{\mathbf a,\mathbf b}(\zeta^\prime)\eta^cdV.
\end{align}
By the definitions of $K_2$ and $V^{\gamma}$, (\ref{g}) equals
\begin{align}\label{19}
&\int_{\mathbb C}\eta^c\int_{V^\gamma_\eta}D_{V^\alpha}K_{V^{\gamma}_{\eta}}\big(l(z,w,\eta),z^{\prime};\bar{\zeta},\bar{\zeta^{\prime}}\big)\zeta^\mathbf a\phi_{\mathbf a,\mathbf b}(\zeta^\prime)dV(\zeta,\zeta^{\prime})dV(\eta).
\end{align}
By the reproducing property of $K_{V^{\gamma}_\eta}$ and Corollary 2.1, we have
\begin{align}\label{f}
&\int_{V^\gamma_\eta}D_{V^\alpha}K_{V^{\gamma}_{\eta}}\big(l(z,w,\eta),z^{\prime};\bar{\zeta},\bar{\zeta^{\prime}}\big)\zeta^\mathbf a\phi_{\mathbf a,\mathbf b}(\zeta^\prime)dV(\zeta,\zeta^{\prime})\nonumber
\\=&\pi^{-1}\big(\gamma\cdot(\mathbf a+\mathbf 1)\big)\phi_{\mathbf a,\mathbf b}(z^{\prime})e^{(\gamma\cdot(\mathbf a+\mathbf 1))(w\bar\eta-|\eta|^2)}z^{\mathbf a}.
\end{align}
Substituting (\ref{f}) to (\ref{19}) yields
\begin{equation}\label{21}
\pi^{-1}\big(\gamma\cdot(\mathbf a +\mathbf 1)\big)z^{\mathbf a}\phi_{\mathbf a,\mathbf b}(z^\prime)\int_{\mathbb C}\frac{e^{\gamma\cdot(\mathbf a+\mathbf 1)w\bar{\eta}}\eta^c}{e^{(\gamma\cdot(\mathbf a+\mathbf 1))|\eta|^2}}dV(\eta).
\end{equation}
Expanding $e^{\gamma\cdot(\mathbf a+\mathbf 1)w\bar{\eta}}$ in (\ref{21}), we have
\begin{align}\label{22}
(\ref{21})=&z^{\mathbf a}\phi_{\mathbf a,\mathbf b}(z^\prime)\int_{\mathbb C}\sum_{j=0}^{\infty}\frac{\big(\gamma\cdot (\mathbf a+\mathbf 1)\big)^{j+1}(w\bar{\eta})^j}{\pi j!}\eta^ce^{-\gamma\cdot(\mathbf a+\mathbf 1)|\eta|^2}dV(\eta)\nonumber
\\=&z^{\mathbf a}\phi_{\mathbf a,\mathbf b}(z^\prime)w^c\int_{\mathbb C}\frac{\big(\gamma\cdot (\mathbf a+\mathbf 1)\big)^{c+1}|{\eta}|^{2c}}{\pi c!}e^{-\gamma\cdot(\mathbf a+\mathbf 1)|\eta|^2}dV(\eta).
\end{align}
Letting $t=\gamma\cdot(\mathbf a+\mathbf 1)|\eta|^2$ and using polar coordinates, the last line of (\ref{22}) becomes
\begin{align}
&z^{\mathbf a}\phi_{\mathbf a,\mathbf b}(z^\prime)w^c\int_{0}^{\infty}\frac{t^{c}}{c!}e^{-t}dt,
\end{align}
which equals $z^{\mathbf a}\phi_{\mathbf a,\mathbf b}(z^\prime)w^c\nonumber.$
Therefore $K_2$ is the Bergman kernel on $V^{\gamma}$.
\end{proof}
When the ``base" domain $\Omega$ is $n$-star-shaped Hartogs, we can apply Theorems 1 and 2 to obtain the kernel functions on the domains $U^{\alpha}$ and $V^{\gamma}$. Lemma 3.1 therefore enables us to apply Theorem 1 and 2 several times to obtain the Bergman kernel on more complicated domains. Moreover, we can obtain the Bergman kernel when the $w$ in $U^{\alpha}$ and $V^{\gamma}$ is a vector instead of a single variable. We'll discuss these refinements in Section 7.  

\section{Examples}
Theorems 1 and 2 enable us to explicitly compute the Bergman kernel in several new situations. First we combine Theorem 1 with the inflation method in \cite{1999BFS} to give a new proof of the explicit formula in \cite{1994D'A2}. Then we compute the kernel function in two new cases.
\begin{ex}
Let the ``base" domain $\Omega$ be the unit ball $\mathbb B^n$ in $\mathbb C^n$. For $p>0$, put $\alpha=({\frac{1}{p}},\dots,{\frac{1}{p}})$. We have
\begin{displaymath}
U^{\alpha}=\{(z,w)\in \mathbb C^n\times \mathbb C:\|z\|^{2p}+|w|^2<1\}.
\end{displaymath}
By Theorem 1, the Bergman kernel function $K_{U^{\alpha}}$ is equal to:
\begin{equation*}
\frac{n!}{\pi^{n+1}p}\frac{(n+p)(1-w\bar{\eta})^{\frac{1}{p}}+(1-p)\langle z,\zeta\rangle}{(1-w\bar{\eta})^{2-\frac{1}{p}}((1-w\bar{\eta})^{\frac{1}{p}}-\langle z,\zeta\rangle)^{n+2}}.
\end{equation*}
\end{ex}
Let $U^{\alpha\prime}=\{(z,w)\in \mathbb C^n\times \mathbb C^m:\|z\|^{2p}+\|w\|^2<1\}$.
Applying the inflation method to $K_{U^{\alpha}}$ yields the Bergman kernel function on $U^{\alpha\prime}$:
\begin{equation*}
\frac{n!}{\pi^{m+n}p}\Big(\frac{\partial}{\partial t}\Big)^{m-1}\frac{(n+p)(1-\langle w,\eta\rangle)^{\frac{1}{p}}+(1-p)\langle z,\zeta\rangle}{(1-\langle w,\eta\rangle)^{2-\frac{1}{p}}((1-\langle w,\eta\rangle)^{\frac{1}{p}}-\langle z,\zeta\rangle)^{n+2}}
\end{equation*}
where $t=\langle w,\eta \rangle$.

Note that if we let the above $p$ tend to $\infty$, then $U^{\alpha}$ becomes $\mathbb B^n\times \mathbb B^1$ and the Bergman kernel $K_{U^{\alpha}}$ equals $K_{\mathbb B^n}\cdot K_{\mathbb B^1}$.
\begin{ex}
	Suppose $\Omega=\{(z,z^{\prime})\in \mathbb C^n\times \mathbb C^m:\|z\|^2+\|z^{\prime}\|^2<1\}$ and $\alpha=(1,\cdots,1)$, then
	\begin{displaymath}
		U^{\alpha}=\{(z,z^{\prime},w)\in \mathbb C^n\times\mathbb C^m\times \mathbb C:|w|<1\:\:\text{and}\:\:\|z\|^2+\|z^{\prime}\|^2+|w|^2<1+|w|^2\|z^{\prime}\|^2\}
	\end{displaymath}
	has the Bergman kernel function:
	\begin{equation}\label{23}
		K_{U^{\alpha}}=\frac{(m+n)!}{\pi^{m+n+1}}\frac{(1-w\bar{\eta})^m(n+1-(n+1)\langle z^{\prime},\zeta^{\prime}\rangle+m\frac{\langle z,\zeta\rangle}{1-w\bar{\eta}})}{(1-w\bar{\eta}-\langle z,\zeta\rangle-\langle z^{\prime},\zeta^{\prime}\rangle+w\bar{\eta}\langle z^{\prime},\zeta^{\prime}\rangle)^{m+n+2}}.
	\end{equation}
	When $m=0$, the right hand side of (\ref{23}) becomes
	\begin{equation*}
		\frac{n+1}{\pi}\frac{n!}{\pi^{n}}\frac{1}{(1-w\bar{\eta}-\langle z,\zeta\rangle)^{n+2}},
	\end{equation*}
which is the Bergman kernel function on the unit ball $\mathbb B^{n+1}$.
\end{ex}
When $n=m=1$, $U^{\alpha}$ becomes 
\begin{equation*}
\{(z,z^{\prime},w)\in \mathbb C^3:|w|<1,|z|^2+|z^{\prime}|^2+|w|^2<1+|w|^2|z^{\prime}|^2\},
\end{equation*}
which is mentioned in the abstract.
Using (\ref{23}), we have:
\begin{equation*}
	K_{U^{\alpha}}=\frac{2}{\pi^{3}}\frac{(1-w\bar{\eta})(2-2z^{\prime}\bar{\zeta^{\prime}}+\frac{ z\bar{\zeta}}{1-w\bar{\eta}})}{(1-w\bar{\eta}-z\bar{\zeta}- z^{\prime}\bar{\zeta^{\prime}}+w\bar{\eta} z^{\prime}\bar{\zeta^{\prime}})^{4}}.
\end{equation*}
\begin{ex}
	Let $\Omega=\{(z,z^{\prime})\in \mathbb C^n\times \mathbb C^m:\|z\|^2+\|z^{\prime}\|^2<1\}$ and $\gamma=(\gamma_1,\dots,\gamma_n)$, then 
	\begin{equation*}
	V^{\gamma}=\{(z,z^{\prime},w)\in \mathbb C^n\times\mathbb C^m\times\mathbb C;\sum_{j=1}^{n}e^{\gamma_j|w|^2}|z_j|^2+\|z^{\prime}\|^2<1\}.
	\end{equation*}
	Put $\rho(z,z^{\prime},w;\bar\zeta,\bar{\zeta^{\prime}},\bar\eta)=1-\sum_{j=1}^{n}e^{\gamma_jw\bar \eta}z_j\bar\zeta_j-\langle z^{\prime},\zeta^{\prime}\rangle$. Then the Bergman kernel function $K_{V^{\gamma}}$ equals
	\begin{equation}\label{24}
\frac{(m+n)!e^{(\gamma\cdot \mathbf 1)w\bar{\eta}}}{\pi^{m+n+1}}\Big(\frac{\gamma\cdot \mathbf 1}{\rho^{m+n+1}}+
	\frac{(m+n+1)\sum_{j=1}^{n}\gamma_je^{\gamma_jw\bar \eta}z_j\bar\zeta_j}{\rho^{m+n+2}}\Big).
	\end{equation}
\end{ex}

When $\gamma=\mathbf 1$ and $n=m=1$, $V^{\gamma}$ becomes
\begin{equation}
\{(z,z^{\prime},w)\in \mathbb C^3:e^{|w|^2}|z|^2+|z^{\prime}|^2<1\},
\end{equation}
which is mentioned in the abstract. Using (\ref{24}), we obtain its kernel function:
\begin{equation}
K_{V^{\gamma}}=\frac{2}{\pi^{3}}\frac{e^{w\bar{\eta}}(1-z^{\prime}\bar{\zeta^{\prime}}+2e^{w\bar{\eta}} z\bar{\zeta})}{(1-e^{w\bar \eta}z\bar{\zeta}-z^{\prime}\bar{\zeta^{\prime}})^{4}}.
\end{equation}

In the next section, we will use (\ref{23}) and (\ref{24}) to obtain the boundary behavior of the Bergman kernel on the domains in Example 4.2 and 4.3. 
\section{Further Analysis of Examples 4.2 and 4.3}
In the following two sections, we focus on the boundary behavior of the Bergman kernel on $U^{\alpha}$ and $V^{\gamma}$. Our estimates for the kernel functions are on the diagonal. In this section, we use the explicit formulas of $K_{U^{\alpha}}$ and $K_{V^{\gamma}}$ from Example 4.2 and 4.3 and some admissible approach regions to analyze their boundary behavior. In the next section, we discuss more general cases without using explicit formulas. 

The boundary behavior of the Bergman kernel in the strongly pseudoconvex case is well understood. C. Fefferman\cite{Fefferman}
, L. Boutet de Monvel and J. Sj\" ostrand\cite{Monvel} gave an asymptotic expansion of the kernel function when the domain is bounded and strongly pseudoconvex. In the weakly  pseudoconvex case, the boundary behavior is difficult to analyze.  Near a weakly pseudoconvex point of finite type, certain estimates on the Bergman kernel were obtained by McNeal \cite{McNeal1,McNeal2}. Less is known near non-smooth boundary points. 

The simplest non-smooth case is the polydisk. 
Let $\Omega$ be the polydisk $\mathbb B^1\times\mathbb B^1$ in $\mathbb C^2$. Since the kernel function on a product domain is equal to the product of the kernel function on each factor, we have
$$K_{\Omega}(z_1,z_2;\bar{\zeta_1},\bar{\zeta_2})=\frac{1}{\pi^(1-z_1\bar\zeta_1)^2}\cdot\frac{1}{\pi(1-z_2\bar{\zeta_2})^2}.$$
If we approach the boundary point $p=(w_1,w_2)$ along the diagonal, then the boundary behavior of $K_{\Omega}$ depends on $w_1$ and $w_2$:
\begin{enumerate}
\item If $|w_1|=1$ and $|w_2|\neq1$, then in $\Omega$
$$\lim\limits_{z\rightarrow p}K_{\Omega}(z;\bar{z})(1-|z_1|^2)^2=\frac{1}{\pi^2(1-|w_2|^2)^2}\neq0.$$
\item If $|w_1|\neq1$ and $|w_2|=1$, then in $\Omega$
$$\lim\limits_{z\rightarrow p}K_{\Omega}(z;\bar{z})(1-|z_2|^2)^2=\frac{1}{\pi^2(1-|w_1|^2)^2}\neq0.$$
\item If $|w_1|=|w_2|=1$, then in $\Omega$
$$\lim\limits_{z\rightarrow p}K_{\Omega}(z;\bar{z})(1-|z_1|^2)^2(1-|z_2|^2)^2=\frac{1}{\pi^2}\neq0.$$
\end{enumerate}
In the 3rd case, $b\Omega$ is not smooth at the boundary points and the behavior of the Bergman kernel depends on the rate at which $|z_1|$ and $|z_2|$ tend to 1.
We will see similar phenomena when we analyze the boundary behavior of $K_{U^{\alpha}}$. 

\:\:
\paragraph{\bf {Example 4.2 revisited}}
The boundary of $U^{\alpha}$ is not smooth. $bU^{\alpha}$ at a point $(0,z^{\prime},w)$ where $\|z^{\prime}\|=|w|=1$. We let $\mathcal S_4$ denote the set of these non-smooth points. By calculating the Levi form of $U^{\alpha}$ on the smooth boundary points, one obtains that $(z,z^{\prime},w)$ is strongly pseudoconvex if both $\|z^{\prime}\|$ and $|w|$ are not equal to 1. We let $\mathcal S_1$ denote the set of these strongly pseudoconvex boundary points. We denote by $\mathcal S_2$ the set 
$$\{(0,z^{\prime},w)\in bU^{\alpha}:\|z^{\prime}\|=1,|w|\neq1\}$$
and denote by $\mathcal S_3$ the set
$$\{(0,z^{\prime},w)\in bU^{\alpha}:\|z^{\prime}\|\neq1,|w|=1\}.$$
Then $bU^{\alpha}=\mathcal S_1\cup \mathcal S_2\cup \mathcal S_3\cup \mathcal S_4$. The boundary behavior of the kernel function near the strongly pseudoconvex points in $\mathcal S_1$ is known. To obtain the result near the points in the other sets, we need an admissible approach region.
For $0<s<1$, let $\mathcal W_s$ denote the set
$$\{(z,z^{\prime},w)\in \mathbb C^n\times\mathbb C^m\times \mathbb C:|w|<1,\|z\|^{2s}+\|z^{\prime}\|^2+|w|^2<1+|w|^2\|z^{\prime}\|^2\}.$$
These sets exhaust $U^\alpha$ when $s$ tends to 1. Moreover, $\mathcal S_2$, $\mathcal S_3$ and $\mathcal S_4$ are contained in $b\mathcal W_s$. 
We will choose $\mathcal W_s$ as the admissible approach region. Let $r(z,z^\prime,w)$ denote the function:
$$1-\|z^{\prime}\|^2-\frac{\|z\|^{2}}{(1-|w|^2)}.$$
Then $U^{\alpha}$ can also be expressed as the set
$$\{(z,z^{\prime},w)\in\mathbb C^n\times\mathbb C^m\times \mathbb C:|w|<1,-r(z,z^{\prime},w)<0\}.$$
Note that the function $$\frac{\|z\|^{2s}}{(1-|w|^2)}$$ is bounded in $\mathcal W_s$. For $p=(0,z^{\prime}_0,w_0)\in \mathcal S_2\cup \mathcal S_3 \cup \mathcal S_4$, when approaching $p$ in $\mathcal W_s$, \begin{equation}\label{27}
\frac{\|z\|^{2}}{(1-|w|^2)}\rightarrow 0.
\end{equation}
Therefore $r(z,z^{\prime},w)$ is continuous in the closure of $\mathcal W_s$.
Combining (\ref{27}) and (\ref{23}), we obtain the following results on boundary behavior:
\begin{enumerate}
\item For $p_0=(0,z^{\prime}_0,w_0)\in \mathcal S_2$, the admissible limit
\begin{equation*}
	\lim\limits_{\mathcal W_s\ni p \rightarrow p_0}K_{U^{\alpha}}(p;\bar p)r^{n+m+1}(p)=\frac{(m+n)!(n+1)}{\pi^{m+n+1}(1-|w_0|^2)^{n+2}}\neq0
	.
\end{equation*}
\item For $p_0=(0,z^{\prime}_0,w_0)\in \mathcal S_3$, the admissible limit 
\begin{equation*}
\lim\limits_{\mathcal W_s\ni p \rightarrow p_0}K_{U^{\alpha}}(p;\bar p)(1-|w|^2)^{n+2}=\frac{(m+n)!(n+1)}{\pi^{m+n+1}r^{n+m+1}(p_0)}\neq0
.
\end{equation*}
\item For $p_0=(0,z^{\prime}_0,w_0)\in \mathcal S_4$, the admissible limit
\begin{equation*}
\lim\limits_{\mathcal W_s\ni p \rightarrow p_0}K_{U^{\alpha}}(p;\bar p)r^{n+m+1}(p)(1-|w|^2)^{n+2}=\frac{(m+n)!(n+1)}{\pi^{m+n+1}}\neq0
.
\end{equation*}
\end{enumerate}

\paragraph{\bf {Example 4.3 revisited}}
Calculating the Levi form shows that $V^{\gamma}$
 is a pseudoconvex domain. For any $w_0\in\mathbb C$ and $z^{\prime}_0\in \mathbb C^m$ on the unit sphere, $(0,z^{\prime}_0, w_0)$ is a weakly pseudoconvex point on $bV^{\gamma}$. With (\ref{24}), we can obtain the boundary behavior of the Bergman kernel function in an admissible approach region of $(0,z^{\prime}_0, w_0)$.

Let $0<s_j<1$ for $1\leq j\leq n$. Let $W_{\mathbf s}$ denote the domain 
\begin{equation}
\{(z,z^{\prime},w)\in \mathbb C^n\times\mathbb C^m\times\mathbb C:\sum_{j=1}^{n}e^{\gamma_j|w|^2}|z_j|^{2s_j}+\|z^{\prime}\|^2<1\}.
\end{equation}
For each $\mathbf s$, $W_{\mathbf s}$ is contained in $ V^{\gamma}$ and it exhausts $ V^{\gamma}$ as each $s_j$ approaches 1. Moreover, $bW_{\mathbf s}$ intersects $bV^{\gamma}$ at those weakly pseudoconvex points on $bV^{\gamma}$. 
Let $\rho$ denote the defining function of $V^{\gamma}$:
$$\rho(z,z^{\prime},w)=1-e^{|w|^2}\|z\|^2-\|z^{\prime}\|^2.$$
When approaching $p_0=(0,z^{\prime}_0, w_0)$ in the approach region $W_{\mathbf s}$, the admissible limit
\begin{equation}
\lim\limits_{
		W_{\mathbf s}\ni p \to p_0
		}
\frac{\sum_{j=1}^{n}e^{\gamma_j|w|^2}|z_j|^{2s_j}}{1-\|z^{\prime}\|^2}=0.
\end{equation}
Therefore,
\begin{equation}\label{30}
\lim\limits_{
		W_{\mathbf s}\ni p \to p_0
		}
\frac{\sum_{j=1}^{n}e^{\gamma_j|w|^2}|z_j|^{2s_j}}{\rho}=0.
\end{equation}
Applying (\ref{30}) to (\ref{24}), we have in $W_{\mathbf s}$:
\begin{equation}
\lim\limits_{
		W_{\mathbf s}\ni p \to p_0
		}{K_{V^{\gamma}}(p;\bar p)\rho^{m+n+1}}(p)=\frac{(m+n)!e^{n|w_0|^2}\sum_{j=1}^{n}\gamma_j}{\pi^{m+n+1}}\neq0.\nonumber
\end{equation}

\section{General Results for Boundary Behavior}
In section 5, we use the explicit formula of the Bergman kernel to study its boundary behavior at weakly pseudoconvex boundary points. In general, we do not require an explicit formula for the kernel function on the ``base" domain. If enough information on the boundary behavior of the kernel function of the ``base" domain is known, we can obtain the boundary behavior of the Bergman kernel on the ``target" domain. Here we'll discuss the boundary behavior for $U^{\alpha}$ and $V^{\gamma}$ when the ``base" domain $\Omega$ is smooth and strongly pseudoconvex. We hope in the future to extend the estimates in \cite{McNeal1,McNeal2} to this setting.

From now on, we let our ``base" domain $\Omega\subseteq\mathbb C^{n+m}$ be smooth, bounded, and $n$-star-shaped Hartogs with defining function $$r(|z_1|^2,\dots,|z_n|^2;z^{\prime},\bar{z}^{\prime})\in C^{\infty}(\bar \Omega).$$
Let $r_j$ denote the partial derivative of $r$ in the $j$'s component. We assume $r$ is non-decreasing in the first $n$ components, i.e. $r_j\geq0$ for $1\leq j\leq n$. Recall that $U^{\alpha}$ denotes the set
$$\Big\{(z,z^{\prime},w)\in\mathbb C^{n+m+1}:|w|<1,r\Big(\frac{|z_1|^2}{(1-|w|^2)^{\alpha_1}},\dots,\frac{|z_n|^2}{(1-|w|^2)^{\alpha_n}};z^{\prime},\bar{z}^{\prime}\Big)<0\Big\};$$
and $V^{\gamma}$ denotes the set
$$\Big\{(z,z^{\prime},w)\in\mathbb C^{n+m+1}:r\Big(e^{\gamma_1|w|^2}|z_1|^2,\dots,e^{\gamma_n|w|^2}|z_n|^2;z^{\prime},\bar{z}^{\prime}\Big)<0\Big\}.$$
To simplify the notation, we let $K_{\Omega}(z,z^{\prime})=K_{\Omega}(z,z^{\prime};\bar{z},\bar{z}^{\prime})$ and
let  \begin{align*}
&r_{U^{\alpha}}(z,z^{\prime},w)=r\Big(\frac{|z_1|^2}{(1-|w|^2)^{\alpha_1}},\dots,\frac{|z_n|^2}{(1-|w|^2)^{\alpha_n}};z^{\prime},\bar{z}^{\prime}\Big),\\
&r_{V^{\gamma}}(z,z^{\prime},w)=r\Big(e^{\gamma_1|w|^2}|z_1|^2,\dots,e^{\gamma_n|w|^2}|z_n|^2;z^{\prime},\bar{z}^{\prime}\Big).\end{align*}
We let $\nabla_z$ denote the partial gradient $(\frac{\partial}{\partial z_1},\dots,\frac{\partial}{\partial z_n})$.

We start with $U^{\alpha}$. The boundary behavior of the Bergman kernel on $U^{\alpha}$ is more complicated than on $V^{\alpha}$ for two reasons:
\begin{enumerate}
\item The possible non-smooth boundary points created by the two inequalities of $U^{\alpha}$. 
\item The singularity of $r_{U^{\alpha}}$ at points where $|w|=1$.
\end{enumerate}
The precise behavior at a boundary point depends on the geometry of $bU^{\alpha}$ there. We therefore stratify the boundary of $U^{\alpha}$ into four parts:
\begin{align*}
\mathcal S_1&=\{(z,z^{\prime},w)\in bU^{\alpha}:\nabla_z(r_{U^\alpha})\neq0\:\:{\rm and}\:\:|w|\neq1\},
\\\mathcal S_2&=\{(z,z^{\prime},w)\in bU^{\alpha}:\nabla_z(r_{U^\alpha})=0\:\:{\rm and}\:\:|w|\neq1\},
\\\mathcal S_3&=\{(z,z^{\prime},w)\in bU^{\alpha}:z=0,\:|w|=1\:\:{\rm and}\:\:(0,z^\prime)\notin b\Omega\},
\\\mathcal S_4&=\{(z,z^{\prime},w)\in bU^{\alpha}:z=0,\:|w|=1\:\:{\rm and}\:\:(0,z^\prime)\in b\Omega\}.
\end{align*}
By the boundedness of $\Omega$, we have
$$\{(z,z^{\prime},w)\in bU^{\alpha}:z\neq0,|w|=1\}=\emptyset.$$
Therefore $bU^{\alpha}=\mathcal S_1\cup \mathcal S_2\cup \mathcal S_3\cup \mathcal S_4$.
The points on $\mathcal S_1$ are strongly pseudoconvex and the boundary behaviors of the Bergman kernel near the boundary points of $\mathcal S_2$, $\mathcal S_3$ and $\mathcal S_4$ can be obtained by a suitable choice of approach regions. 

Let $x=(|z_1|^2,\dots,|z_n|^2)$. Since $\Omega$ is $n$-star-shaped Hartogs, we can let $L_{\Omega}$ denote the function such that
$$L_{\Omega}(x;z^{\prime},\bar{z}^{\prime})=K_{\Omega}(z,z^{\prime}).$$
We recall the result of C. Fefferman \cite{Fefferman} for bounded strongly pseudoconvex domain $\Omega$. There exist $ \Psi,\Phi\in C^{\infty}(\bar{\Omega})$, such that
\begin{equation}\label{31}
L_{\Omega}(x;z^{\prime},\bar{z}^{\prime})=\frac{\Psi(x;z^{\prime},\bar{z}^{\prime})}{(-r)^{n+m+1}(x;z^{\prime},\bar{z}^{\prime})}+\Phi(x;z^{\prime},\bar{z}^{\prime})\log (-r(x;z^{\prime},\bar{z}^{\prime})).
\end{equation}
 Applying (\ref{31}) and Theorem 1, we obtain the following result on the ``target" domain $U^{\alpha}$.
\begin{thm}
	Let $\Omega$ and $U^{\alpha}$ be as above. Suppose $\Omega$ is strongly pseudoconvex. Then $U^{\alpha}$ is pseudoconvex. The point $p\in bU^{\alpha}$ is a strongly pseudoconvex point if $p\in \mathcal S_1$. Near the points of $\mathcal S_2$, $\mathcal S_3$ and $\mathcal S_4$, the kernel function behaves in three different ways: \begin{enumerate}
\item For $(z_0,z^{\prime}_0,w_0)\in \mathcal S_2$, there exists an admissible approach region $W_2$ of $(z_0,z^{\prime}_0,w_0)$ such that when approaching $(z_0,z^{\prime}_0,w_0)$ in $W_2$,
\begin{equation}\label{32}
K_{U^{\alpha}}(z,z^{\prime},w)(-r_{U^{\alpha}})^{m+n+1}(z,z^{\prime},w)
\end{equation}
has a nonzero limit.
\item For $(z_0,z^{\prime}_0,w_0)\in \mathcal S_3$, there exists an admissible approach region $W_3$ of $(z_0,z^{\prime}_0,w_0)$ such that when approaching $(z_0,z^{\prime}_0,w_0)$ in $W_3$,
\begin{equation}\label{33}
K_{U^{\alpha}}(z,z^{\prime},w)(1-|w|^2)^{2+\alpha\cdot\mathbf 1}
\end{equation}
has a nonzero limit.
\item For $(z_0,z^{\prime}_0,w_0)\in \mathcal S_4$, there exists an admissible approach region $W_4$ of $(z_0,z^{\prime}_0,w_0)$ such that when approaching $(z_0,z^{\prime}_0,w_0)$ in $W_4$,
\begin{equation}\label{34}
K_{U^{\alpha}}(z,z^{\prime},w)(1-|w|^2)^{2+\alpha\cdot\mathbf 1}(-r_{U^{\alpha}})^{m+n+1}(z,z^{\prime},w)
\end{equation}
has a nonzero limit.
	\end{enumerate}	
\end{thm}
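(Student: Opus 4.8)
The plan is to combine the exact identity of Theorem 1 with Fefferman's expansion (\ref{31}). Restricting (\ref{d}) to the diagonal $\zeta=z$, $\zeta^{\prime}=z^{\prime}$, $\eta=w$ and using the transformation formula (\ref{12}), the points $h(z,w,\eta)$ and $f_{\alpha}(\zeta,\eta)$ both collapse onto the single point $f_{\alpha}(z,w)$, so $K_{U^{\alpha}}$ on the diagonal becomes the operator $D_1$ from the proof of Theorem 1 applied to $L_{\Omega}$ evaluated at $\tilde x=\big(|z_1|^2/(1-|w|^2)^{\alpha_1},\dots,|z_n|^2/(1-|w|^2)^{\alpha_n}\big)$, which is exactly the argument appearing in $r_{U^{\alpha}}$. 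Carrying out the chain rule for the pieces $z_j\frac{\partial}{\partial z_j}$ — which on the diagonal turn into $\tilde x_j\,\partial_{\tilde x_j}$ acting on $L_{\Omega}$, since $L_{\Omega}$ depends on the holomorphic variables only through $|z_j|^2$ — I would obtain the closed expression
\begin{equation*}
K_{U^{\alpha}}(z,z^{\prime},w)=\frac{(1+\alpha\cdot\mathbf 1)L_{\Omega}(\tilde x;z^{\prime},\bar{z}^{\prime})+\sum_{j=1}^{n}\alpha_j\tilde x_j\,\partial_{\tilde x_j}L_{\Omega}(\tilde x;z^{\prime},\bar{z}^{\prime})}{\pi(1-|w|^2)^{2+\alpha\cdot\mathbf 1}}.
\end{equation*}
Substituting (\ref{31}) then writes $K_{U^{\alpha}}$ on the diagonal as an explicit sum of terms of the shape $(\text{smooth})\cdot(1-|w|^2)^{-(2+\alpha\cdot\mathbf 1)}(-r_{U^{\alpha}})^{-k}$ with $k\le n+m+2$, together with logarithmic terms $(\text{smooth})\cdot(1-|w|^2)^{-(2+\alpha\cdot\mathbf 1)}\log(-r_{U^{\alpha}})$.

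The next step is to isolate the dominant term in each stratum. The zeroth-order part $(1+\alpha\cdot\mathbf 1)I$ preserves the pole order $n+m+1$, contributing $\frac{(1+\alpha\cdot\mathbf 1)\Psi}{\pi(1-|w|^2)^{2+\alpha\cdot\mathbf 1}(-r_{U^{\alpha}})^{n+m+1}}$; the first-order part $\sum_j\alpha_j\tilde x_j\partial_{\tilde x_j}$ raises the pole to order $n+m+2$, but its top-order contribution carries the factor $\sum_j\alpha_j\tilde x_j r_j=\sum_j\alpha_j z_j\frac{\partial}{\partial z_j}r_{U^{\alpha}}$, which vanishes at every point of $\mathcal S_2$ (there $\nabla_z r_{U^{\alpha}}=0$ by definition) and at every point of $\mathcal S_4$ (there $z=0$). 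Hence the genuine leading singularity is of order $n+m+1$ in $(-r_{U^{\alpha}})$ and of order $2+\alpha\cdot\mathbf 1$ in $(1-|w|^2)$, and the three cases differ only in which factor actually degenerates: on $\mathcal S_2$ only $(-r_{U^{\alpha}})\to0$ since $|w_0|\neq1$; on $\mathcal S_3$ only $(1-|w|^2)\to0$, while $-r_{U^{\alpha}}\to-r(0;z_0^{\prime},\bar{z}_0^{\prime})>0$ because $(0,z_0^{\prime})\notin b\Omega$; and on $\mathcal S_4$ both degenerate at once.

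For each stratum I would then build an admissible approach region, modeled on the regions $\mathcal W_s$ and $W_{\mathbf s}$ of Section 5: lowering the exponents on the $|z_j|^2$ in the defining inequality of $U^{\alpha}$ produces a family of subdomains exhausting $U^{\alpha}$, whose boundaries meet $bU^{\alpha}$ precisely along $\mathcal S_2\cup\mathcal S_3\cup\mathcal S_4$ and inside which $\frac{\sum_j\alpha_j\tilde x_j r_j}{-r_{U^{\alpha}}}\to0$. This condition makes the order-$(n+m+2)$ term negligible after multiplication by the stated weight, while the logarithmic and subleading-pole terms drop out automatically. Passing to the limit I would obtain: on $\mathcal S_2$, the quantity in (\ref{32}) tends to $\frac{(1+\alpha\cdot\mathbf 1)\Psi(\tilde x_0;z_0^{\prime},\bar{z}_0^{\prime})+\sum_j\alpha_j\tilde x_{0,j}\partial_{\tilde x_j}\Psi}{\pi(1-|w_0|^2)^{2+\alpha\cdot\mathbf 1}}$; on $\mathcal S_3$, the quantity in (\ref{33}) tends to $\frac{(1+\alpha\cdot\mathbf 1)K_{\Omega}(0,z_0^{\prime})}{\pi}$, the derivative corrections dropping out because $\tilde x\to0$ and $L_{\Omega}$ is smooth near the interior point $(0,z_0^{\prime})$; and on $\mathcal S_4$, the quantity in (\ref{34}) tends to $\frac{(1+\alpha\cdot\mathbf 1)\Psi(0;z_0^{\prime},\bar{z}_0^{\prime})}{\pi}$. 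Nonvanishing of these limits is to be read off from the strict positivity of Fefferman's leading coefficient $\Psi$ on $b\Omega$ and of $K_{\Omega}$ on the interior diagonal.

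Finally, the pseudoconvexity of $U^{\alpha}$ and the strong pseudoconvexity of the points of $\mathcal S_1$ follow from the Levi-form computation already announced in the Remark of Section 3, together with the defining condition $\nabla_z r_{U^{\alpha}}\neq0$, $|w|\neq1$ of $\mathcal S_1$. The hardest part will be the construction of the approach regions and the verification that every competing term vanishes in the limit: near $\mathcal S_4$ the two singular factors $(1-|w|^2)$ and $(-r_{U^{\alpha}})$ degenerate simultaneously, so one must control their relative rates inside the region; and near $\mathcal S_2$ the raised-pole term is an indeterminate $0\cdot\infty$ whose suppression must be arranged by the approach region, after which one still has to confirm, using the positivity of $\Psi$, that the surviving limit (including the $\partial_{\tilde x_j}\Psi$ correction) does not accidentally cancel.
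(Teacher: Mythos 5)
Your proposal is correct and follows essentially the same route as the paper: diagonalize the identity of Theorem 1 to write $K_{U^{\alpha}}$ as $(c_\alpha I+\sum_j\alpha_j z_j\partial_{z_j})$ applied to $L_\Omega(X)/\pi(1-|w|^2)^{2+\alpha\cdot\mathbf 1}$, substitute Fefferman's expansion, observe that the raised-pole term carries the factor $D(-r)$ which is killed on $\mathcal S_2$ and $\mathcal S_4$, and build approach regions in the style of Section 5 forcing $D(-r)/(-r)\to 0$. The paper's proof organizes the bookkeeping via the decomposition $I_1+I_2$, $\pi I_2=J_1-J_2$, and uses the region $\{\sum_j(|z_j|^2r_j)^q<-r\}$ for $\mathcal S_2$ and $\{|z_j|^2<(1-|w|^2)^{p_j},\,p_j>\alpha_j\}$ for $\mathcal S_3$, but the substance matches yours, including the final appeal to positivity of $\Psi$ on $b\Omega$ and of $K_\Omega$ at interior points.
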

\begin{proof} 
Let $X=(X_1,\dots,X_n)$ denote the vector $$\Big(\frac{|z_1|^2}{(1-|w|^2)^{\alpha_1}},\dots,\frac{|z_n|^2}{(1-|w|^2)^{\alpha_n}}\Big).$$
Since the range of $X$ on $U^{\alpha}$ is the same as the range of $x$ on $\Omega$, we can replace $x$ in (\ref{31}) by $X$ and have 
\begin{equation}\label{35}
L_{\Omega}(X;z^{\prime},\bar{z}^{\prime})=\frac{\Psi(X;z^{\prime},\bar{z}^{\prime})}{(-r)^{n+m+1}(X;z^{\prime},\bar{z}^{\prime})}+\Phi(X;z^{\prime},\bar{z}^{\prime})\log (-r(X;z^{\prime},\bar{z}^{\prime}))
\end{equation}
with $\Psi(X;z^{\prime},\bar{z}^{\prime}),\Phi(X;z^{\prime},\bar{z}^{\prime})\in C^{\infty}(U^{\alpha})$. Using change of variables formula yields 
$$L_{\Omega}(X;z^{\prime},\bar{z}^{\prime})=(1-|w|^2)^{\alpha\cdot\mathbf 1}K_{U^{\alpha}_{w}}(z,z^{\prime}).$$
Then Theorem 1 implies that
\begin{equation}\label{36}
K_{U^{\alpha}}(z,z^{\prime},w)=(c_{\alpha}I+D)\frac{L_{\Omega}(X;z^{\prime},\bar{z}^{\prime})}{\pi(1-|w|^2)^{2+\alpha\cdot\mathbf 1}},
\end{equation}
where $c_{\alpha}=(1+\sum_{j=1}^{n}\alpha_j)$ and  $D=\sum_{j=1}^{n}\alpha_jz_j\frac{\partial}{\partial z_j}$. 

Note that $r(X;z^{\prime},\bar{z}^{\prime})$ is equal to $r_{U^{\alpha}}(z,z^{\prime},w)$. Multiplying both sides of (\ref{36}) by $(1-|w|^2)^{2+\alpha\cdot\mathbf 1}(-r_{U^{\alpha}})^{m+n+1}(z,z^{\prime},w)$, (\ref{34}) becomes
\begin{equation}\label{j}
\pi^{-1}(-r)^{n+m+1}(X;z^{\prime},\bar{z}^{\prime})(c_{\alpha}I+D)L_{\Omega}(X;z^{\prime},\bar{z}^{\prime}).
\end{equation}
We set $I_1+I_2$ equals (\ref{j}) where 
\begin{align*}
 &I_1=\pi^{-1}(-r)^{n+m+1}(X;z^{\prime},\bar{z}^{\prime})c_{\alpha}L_{\Omega}(X;z^{\prime},\bar{z}^{\prime}),\nonumber\\&I_2=\pi^{-1}(-r)^{n+m+1}(X;z^{\prime},\bar{z}^{\prime})D\big(L_{\Omega}(X;z^{\prime},\bar{z}^{\prime})\big)\nonumber.\end{align*}
Applying (\ref{35}) to $I_1$, we have
\begin{equation}\label{38}\pi I_1(X;z^{\prime},\bar{z}^{\prime})=c_{\alpha}\big(\Psi+(-r)^{n+m+1}\Phi\log(-r)\big).
\end{equation}
Applying the product rule to $I_2$, we have
\begin{align}
\pi I_2(X;z^{\prime},\bar{z}^{\prime})\nonumber=&D\big((-r)^{n+m+1}L_{\Omega}\big)-L_{\Omega}D(-r)^{n+m+1}\nonumber.
\end{align}
We set $J_1=D\big((-r)^{n+m+1}L_{\Omega}\big)$ and $J_2=L_{\Omega}D(-r)^{n+m+1}$, then $\pi I_2=J_1-J_2$. Substituting  (\ref{35}) to $J_1$ and $J_2$ yields
\begin{align}\label{40}
J_1(X;z^{\prime},\bar{z}^{\prime})=&D\Psi+(-r)^{n+m+1}\log(-r)Dt\nonumber\\&+\big(1+(n+m+1)\log(-r)\big)(-r)^{n+m}D(-r).
\end{align}
and 
\begin{align}\label{41}
J_2(X;z^{\prime},\bar{z}^{\prime})&=(n+m+1)\big((-r)^{n+m+1}L_{\Omega}\big)\frac{D(-r)}{-r}\nonumber\\&=(n+m+1)\big(\Psi+\Phi(-r)^{n+m+1}\log(-r)\big)\frac{D(-r)}{-r}.
\end{align}
Let $p=(z_0,z_0^{\prime},w_0)$ be a boundary point $U^{\alpha}$. When $|w_0|\neq 1$, we let  $X_0$ denote the corresponding vector $X$ at point $p$.

\paragraph{\bf Case 1)}For $(z_0,z^{\prime}_0,w_0)\in \mathcal S_2$, we have $\nabla_zr_{U^{\alpha}}(p)=0$, $(X_0,z_0^{\prime})\in \partial\Omega$, and $|w_0|\neq1$. Then a nonzero limit of (\ref{34}) exists is equivalent to a nonzero limit of (\ref{32}) exists. Since $|w_0|\neq1$, $X$ is smooth near $p$. Thus $r(X;z^{\prime},\bar{z}^{\prime})$ is smooth in a neighborhood of $p$ and has limit $r(X_0;z_0^{\prime},\bar{z_0^{\prime}})=0$. Since $(-r)\log(-r)$ also has limit equals zero at point $p$, the limit of $I_1$ and $J_1$ exist. To achieve the limit existence of $J_2$ at $p$, we need an admissible approach region in which the limit of $\frac{D(-r)}{-r}$ equals zero. Let $r_j(X;z^{\prime},\bar{z}^{\prime})$ be the partial derivative of $r$ in the $j$th component. For $0<q<1$, we consider the following approach region
$$W_2=\Big\{(z,z^{\prime},w)\in U^{\alpha}:\sum_{j=1}^{n}\big(|z_j|^2r_j(X;z^{\prime},\bar{z}^{\prime})\big)^q<-r(X;z^{\prime},\bar{z}^{\prime})\Big\}.$$ 
We show $W_2$ is not empty. Since $\nabla_zr_{U^{\alpha}}(p)=0$, we have
$$\frac{\partial}{\partial z_j}r(X;z^{\prime},\bar{z}^{\prime})=\frac{\bar z_j}{(1-|w|^2)^{\alpha_j}}r_j(X;z^{\prime},\bar{z}^{\prime})=0,$$
for all $j$ at $p$. Thus $\sum_{j=1}^{n}\big(|z_j|^2r_j(X;z^{\prime},\bar{z}^{\prime})\big)^q=0$ when approaching $p$ along the normal direction of $bU^{\alpha}$. This observation implies that $W_2$ is not empty and $p\in bW_2$.  By perhaps shrink $W_2$, we may consider $W_2$ as a connected set. Note that
\begin{align}
\Big|\frac{D(-r)(X;z^{\prime},\bar{z}^{\prime})}{-r(X;z^{\prime},\bar{z}^{\prime})}\Big|&=\frac{-\sum_{j=1}^{n}\alpha_j\frac{|z_j|^2}{(1-|w|^2)^{\alpha}}r_j(X;z^{\prime},\bar{z}^{\prime})}{-r(X;z^{\prime},\bar{z}^{\prime})}\nonumber\\&<\frac{c\sum_{j=1}^{n}|z_j|^{2}r_j(X;z^{\prime},\bar{z}^{\prime})}{-r(X;z^{\prime},\bar{z}^{\prime})}
\end{align}
for some constant $c>0$. In $W_2$,
$$\frac{\sum_{j=1}^{n}\big(|z_j|r_j(X;z^{\prime},\bar{z}^{\prime})\big)^q}{-r(X;z^{\prime},\bar{z}^{\prime})}<1.$$
When approaching boundary point $p$ inside $W_2$, we have 
\begin{align}
&\frac{\sum_{j=1}^{n}|z_j|^{2}r_j(X;z^{\prime},\bar{z}^{\prime})}{-r(X;z^{\prime},\bar{z}^{\prime})}\nonumber\\\leq&\frac{\sum_{k=1}^{n}\big(|z_j|^2r_j(X;z^{\prime},\bar{z}^{\prime})\big)^{1-q}\sum_{j=1}^{n}\big(|z_j|^2r_j(X;z^{\prime},\bar{z}^{\prime})\big)^q}{-r(X;z^{\prime},\bar{z}^{\prime})}\nonumber\\<&\sum_{k=1}^{n}\big(|z_j|^2r_j(X;z^{\prime},\bar{z}^{\prime})\big)^{1-q}\rightarrow 0.
\end{align}
Hence $J_1$ and $J_2$ in (\ref{40}) and (\ref{41}) have admissible limit zero at point $p$. By the strong pseudoconvexity of $\Omega$, (\ref{38}) has nonzero limit. Therefore in $W_2$, the limit of (\ref{32}) at point $p$ exists and is not equal to zero.
\paragraph{\bf Case 2)}For $(z_0,z^{\prime}_0,w_0)\in \mathcal S_3$, we have $z_0=0$, $(0,z_0^{\prime})\notin \partial\Omega$, and $|w_0|=1$. We consider the region
$$W_3=\Big\{(z,z^{\prime},w)\in U^{\alpha}:\frac{|z_j|^2}{(1-|w|^2)^{p_j}}<1,\forall \;1\leq j\leq n\Big\}$$
where $p_j>\alpha_j$ for all $j$. Similar reasoning as above implies that $W_3$ is nonempty and connected. When we approaching the bounadary point $p$ in $W_3$,
$$\frac{|z_j|^2}{(1-|w|^2)^{\alpha_j}}=\frac{|z_j|^2(1-|w|^2)^{p_j-\alpha_j}}{(1-|w|^2)^{p_j}}<(1-|w|^2)^{p_j-\alpha_j}\rightarrow 0.$$
Thus $X$, $D\Psi(X;z^{\prime},\bar{z}^{\prime})$, $D\Phi(X;z^{\prime},\bar{z}^{\prime})$ and $D(-r(X;z^{\prime},\bar{z}^{\prime}))$ all tends to zero at $p$. Since $(0,z_0^{\prime})\notin \partial\Omega$, the function $-r(X,z^{\prime},\bar{z}^{\prime})$ has a positive limit at point $p$. Plugging these results into (\ref{40}) and (\ref{41}), we have both $J_1$ and $J_2$ tend to zero.
The limit of (\ref{38}) is positive since $I_1=c_\alpha L_{\Omega}$ and $L_{\Omega}$ is positive at $(0,z^{\prime}_0,\bar{z^{\prime}_0})$.
Therefore when approaching $p$ in $W_3$, (\ref{34}) and $r_{U^{\alpha}}$ has a nonzero limit. Hence the limit of (\ref{33}) is also not zero.
\paragraph{\bf Case 3)} When $(z_0,z^{\prime}_0,w_0)\in \mathcal S_4$, we have $z_0=0$, $(0,z_0^{\prime})\in \partial\Omega$, and $|w_0|=1$. Consider the approach region $W_4=W_2\bigcap W_3$.
Since both $W_2$ and $W_3$ contains the set $Z\{z_1,\dots,z_n\}\bigcap U^{\alpha}$ and $p\in Z\{z_1,\dots,z_n\}\bigcap \overline {U^{\alpha}}$, we can approach $p$ inside $W_4$.
By our previous results, when we tend to $p$ in $W_4$,
 $X$, $D\Psi(X;z^{\prime},\bar{z}^{\prime})$, $D\Phi(X;z^{\prime},\bar{z}^{\prime})$, $D(-r(X;z^{\prime},\bar{z}^{\prime}))$, $r(X,z^{\prime},\bar{z}^{\prime})$, $r(X,z^{\prime},\bar{z}^{\prime})\log(-r(X,z^{\prime},\bar{z}^{\prime}))$ and $\frac{D(-r(X;z^{\prime},\bar{z}^{\prime}))}{-r(X;z^{\prime},\bar{z}^{\prime})}$ all tends to zero.
Hence the limit of $J_1$ and $J_2$ equals zero and the limit of $I_1$ is equal to a nonzero constant, and (\ref{34}) has a nonzero admissible limit in $W_4$.

\paragraph{\bf Case 4)}For the boundary points in $\mathcal S_1$, the strong pseudoconvexity can be obtained by calculating the Levi form.\qedhere
\end{proof}

Compared to $U^{\alpha}$, the boundary behavior of the kernel funcion $V^{\gamma}$ is simpler. The argument is similar to the proof of Theorem 3. We state the result without proof.
\begin{thm}
Let $\Omega$ and $V^{\gamma}$ be as above. Suppose $\Omega$ is bounded and strongly pseudoconvex. Then $V^{\gamma}$ is pseudoconvex. The point $p=(z_0,z^{\prime}_0,w_0)\in bV^{\gamma}$ is a weakly pseudoconvex point if $\nabla_z(r_{V^{\gamma}})(p)=0$. Moreover, for weakly pseudoconvex point $p$, we can find an admissible approach region $W$, such that when approaching $p$ inside $W$:
$$K_{V^{\gamma}}(z,z^{\prime},w)(-r_{V^{\gamma}})^{n+m+1}(z,z^{\prime},w)$$tends to a nonzero constant.
\end{thm}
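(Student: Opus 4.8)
The plan is to follow the proof of Theorem 3 almost verbatim, replacing the vector $X$ by
$$Y=\Big(e^{\gamma_1|w|^2}|z_1|^2,\dots,e^{\gamma_n|w|^2}|z_n|^2\Big),$$
so that $r(Y;z^{\prime},\bar z^{\prime})=r_{V^{\gamma}}(z,z^{\prime},w)$. First I would derive the analogue of (\ref{36}). The biholomorphism $g_{\gamma}(\cdot,w)\colon V^{\gamma}_w\to\Omega$ has constant Jacobian with $JF\,\overline{JF}=e^{(\gamma\cdot\mathbf 1)|w|^2}$, so the transformation rule (\ref{4}) gives $K_{V^{\gamma}_w}(z,z^{\prime})=e^{(\gamma\cdot\mathbf 1)|w|^2}L_{\Omega}(Y;z^{\prime},\bar z^{\prime})$. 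Substituting this into Theorem 2 and restricting to the diagonal $\eta=w$, $\zeta=z$, $\zeta^{\prime}=z^{\prime}$ (where the factor $e^{(\gamma\cdot\mathbf 1)(w\bar\eta-|\eta|^2)}$ equals $1$ and $l(z,w,w)=z$) yields
$$K_{V^{\gamma}}(z,z^{\prime},w)=\frac{e^{(\gamma\cdot\mathbf 1)|w|^2}}{\pi}\Big(\sum_{j=1}^{n}\gamma_j\big(I+z_j\tfrac{\partial}{\partial z_j}\big)\Big)L_{\Omega}(Y;z^{\prime},\bar z^{\prime}),$$
where, exactly as in Theorem 3, the holomorphic operator $z_j\tfrac{\partial}{\partial z_j}$ acts on $L_{\Omega}(Y;\cdot)$ through the chain rule as $Y_j\tfrac{\partial}{\partial Y_j}$. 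One checks against (\ref{24}) that this reproduces the explicit formula when $\Omega$ is a ball. The crucial simplification compared with $U^{\alpha}$ is that no singular factor $(1-|w|^2)^{-1}$ appears, so the only mechanism for boundary degeneracy is the vanishing of $\nabla_z r_{V^{\gamma}}$; this is why a single case replaces the four cases of Theorem 3.

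I would then insert Fefferman's expansion. Since the range of $Y$ on $V^{\gamma}$ equals the range of $x$ on $\Omega$, I may substitute $x\mapsto Y$ in (\ref{31}), obtaining
$$L_{\Omega}(Y;z^{\prime},\bar z^{\prime})=\frac{\Psi(Y;z^{\prime},\bar z^{\prime})}{(-r_{V^{\gamma}})^{n+m+1}}+\Phi(Y;z^{\prime},\bar z^{\prime})\log(-r_{V^{\gamma}}),$$
with $\Psi,\Phi\in C^{\infty}$ and $\Psi>0$ on $b\Omega$. Multiplying the formula for $K_{V^{\gamma}}$ by $(-r_{V^{\gamma}})^{n+m+1}$ and splitting the operator into its zeroth order part $\gamma\cdot\mathbf 1$ and its differential part $D=\sum_{j}\gamma_j z_j\tfrac{\partial}{\partial z_j}$, I set the result equal to $I_1+I_2$, where $I_1=\tfrac{e^{(\gamma\cdot\mathbf 1)|w|^2}}{\pi}(\gamma\cdot\mathbf 1)(-r)^{n+m+1}L_{\Omega}(Y)$ and $I_2=\tfrac{e^{(\gamma\cdot\mathbf 1)|w|^2}}{\pi}(-r)^{n+m+1}D\,L_{\Omega}(Y)$. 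The expansion gives $I_1=\tfrac{e^{(\gamma\cdot\mathbf 1)|w|^2}(\gamma\cdot\mathbf 1)}{\pi}\big(\Psi+(-r)^{n+m+1}\Phi\log(-r)\big)$; since $(-r)^{n+m+1}\log(-r)\to 0$ at the boundary point $p=(z_0,z_0^{\prime},w_0)$ and $(Y_0;z_0^{\prime},\bar z_0^{\prime})\in b\Omega$, this has the nonzero limit $\tfrac{e^{(\gamma\cdot\mathbf 1)|w_0|^2}(\gamma\cdot\mathbf 1)}{\pi}\Psi(Y_0;z_0^{\prime},\bar z_0^{\prime})$, positivity coming from the strong pseudoconvexity of $\Omega$.

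The heart of the argument, and the step I expect to be the main obstacle, is showing $I_2\to 0$. As in Case 1 of Theorem 3, writing $I_2$ by the product rule as $J_1-J_2$ with $J_1=D\big((-r)^{n+m+1}L_{\Omega}\big)$ and $J_2=(n+m+1)\big(\Psi+\Phi(-r)^{n+m+1}\log(-r)\big)\tfrac{D(-r)}{-r}$, every contribution to $J_1$ and to the bracket in $J_2$ is smooth up to $b\Omega$ and vanishes there because it carries a positive power of $(-r)$ or a factor $(-r)\log(-r)$; the only delicate term is $\tfrac{D(-r)}{-r}$, where $D(-r)=-\sum_j\gamma_j Y_j r_j(Y;z^{\prime},\bar z^{\prime})$ and $r_j\ge 0$. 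I would therefore construct, for a fixed $0<q<1$, the admissible approach region
$$W=\Big\{(z,z^{\prime},w)\in V^{\gamma}:\sum_{j=1}^{n}\big(e^{\gamma_j|w|^2}|z_j|^2\,r_j(Y;z^{\prime},\bar z^{\prime})\big)^q<-r_{V^{\gamma}}\Big\}.$$
The hypothesis $\nabla_z r_{V^{\gamma}}(p)=0$ means $\bar z_{0,j}e^{\gamma_j|w_0|^2}r_j(Y_0)=0$, hence $Y_{0,j}r_j(Y_0)=0$ for every $j$; thus the left side vanishes at $p$ as one approaches along the inner normal, so $W$ is nonempty and $p\in bW$. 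Inside $W$ the Hölder estimate
$$\frac{\sum_j Y_j r_j}{-r}\le\Big(\sum_j(Y_j r_j)^{1-q}\Big)\frac{\sum_j(Y_j r_j)^q}{-r}<\sum_j(Y_j r_j)^{1-q}\longrightarrow 0$$
forces $\tfrac{D(-r)}{-r}\to 0$, so $J_1,J_2\to 0$ and the admissible limit of $K_{V^{\gamma}}(-r_{V^{\gamma}})^{n+m+1}$ equals the nonzero limit of $I_1$. Finally, for the weak pseudoconvexity claim I would observe that $\nabla_z r_{V^{\gamma}}(p)=0$ forces $Y_{0,j}r_j(Y_0)=0$, whence also $\partial_w r_{V^{\gamma}}(p)=\bar w_0\sum_j\gamma_j Y_{0,j}r_j(Y_0)=0$, so the complex tangent space contains the $z$ and $w$ directions; a direct Levi-form computation then exhibits a null direction there. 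I expect the construction of $W$ and the Hölder bound to be the only genuinely delicate points, the remainder being bookkeeping parallel to Theorem 3.
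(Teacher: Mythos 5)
Your proposal is correct and takes essentially the approach the paper intends: Theorem 4 is stated without proof, with only the remark that the argument is similar to that of Theorem 3, and your write-up is precisely that adaptation — the transformation rule for $g_{\gamma}(\cdot,w)$ giving $K_{V^{\gamma}_w}=e^{(\gamma\cdot\mathbf 1)|w|^2}L_{\Omega}(Y)$, the substitution of Fefferman's expansion with $x\mapsto Y$, the decomposition into $I_1$ and $I_2=\pi^{-1}(J_1-J_2)$, and the approach region $W$ with the H\"older estimate controlling $D(-r)/(-r)$, exactly mirroring Case 1 of Theorem 3. One point to tighten: the term $D\Psi=\sum_j\gamma_jY_j\partial_{Y_j}\Psi$ inside $J_1$ carries no factor of $(-r)$ or $(-r)\log(-r)$, so your stated reason for $J_1\to 0$ does not cover it; it does vanish, but because $\nabla_z r_{V^{\gamma}}(p)=0$ combined with $r_j\ge 0$ and the strong pseudoconvexity of $\Omega$ forces $z_0=0$ and hence $Y_0=0$ (a point the paper also glosses over in the analogous step of Theorem 3).
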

\begin{re*}In both Theorems 3 and 4 above, we assumed the existence of $z^{\prime}$ components. Because of our assumption, The points in $\mathcal S_2$, $\mathcal S_3$ of Theorem 3 and the weakly pseudoconvex boundary points in $bV^\gamma$ of Theorem 4 are of infinite type in the sense of D'Angelo. If there is no $z^{\prime}$, i.e. $m=0$ in the definition of $\Omega$, then the boundary geometry of the target domains is different. In this case, $V^{\gamma}$ becomes a strongly pseudoconvex domain. The boundary geometry of $U^{\alpha}$, on the other hand, depends on the value of $\alpha$. One can see this immediately from Example 4.1.\end{re*}
\section{Higher Dimensional Analogues}
In Theorems 1 and 2, we apply a first order differential operator to the Bergman kernel function on the ``base" domain to obtain the kernel function on certain domains in one higher dimension. By Lemma 3.1, the ``target" domains $U^{\alpha}$ and $V^{\gamma}$ are also $(n+1)$-star-shaped Hartogs. Therefore we can repeat using Theorems 1 and 2 to obtain the Bergman kernel on more complicated domains.
\begin{ex}[Repeated use of Theorems 1 and 2]
	The diagram below indicates how to obtain the kernel function explicitly on increasingly complicated domains.
\begin{gather}
\{z\in \mathbb C:|z|^2<1\}\nonumber
\\\Downarrow\nonumber
\\\{z\in \mathbb C^2:|z_1|^{2p}+|z_2|^2<1\}\nonumber
\\\Downarrow\nonumber
\\\{z\in \mathbb C^3:|z_1|^{2p}+\exp\{|z_3|^2\}|z_2|^2<1\}\nonumber
\\\Downarrow\nonumber
\\\Big{\{}z\in \mathbb C^4:|z_1|^{2p_1}+\exp\{\frac{|z_3|^2}{(1-|z_4|)^{p_2}}\}|z_2|^2<1,|z_4|<1\Big\}\nonumber
\\\Downarrow\nonumber
\\\Big\{z\in \mathbb C^5:\frac{|z_1|^{2p_1}}{(1-|z_5|^2)^{p_3}}+\exp\{\frac{|z_3|^2}{(1-|z_4|)^{p_2}}\}|z_2|^2<1,|z_4|<1,|z_5|<1\Big\}\nonumber
\\\Downarrow\nonumber
\\\Big\{z\in \mathbb C^6:\frac{|z_1|^{2p_1}}{(1-e^{|z_6|^2}|z_5|^2)^{p_3}}+\exp\{\frac{|z_3|^2}{(1-|z_4|)^{p_2}}\}|z_2|^2<1,|z_4|<1,e^{|z_6|^2}|z_5|^2<1\Big\}\nonumber
\\\Downarrow\nonumber
\\\vdots\nonumber
\end{gather}
The Bergman kernels in the first two cases are known. The kernel in the third case is equal to
\begin{align*}
&\frac{e^{z_3\bar\zeta_3}}{\pi^3p}\bigg(\frac{(1+p)(1-e^{z_3\bar{\zeta_3}}z_2\bar{\zeta_2})^{\frac{1}{p}}+(1-p)z_1\bar{\zeta_1}}{(1-e^{z_3\bar{\zeta_3}}z_2\bar{\zeta_2})^{2-\frac{1}{p}}((1-e^{z_3\bar{\zeta_3}}z_2\bar{\zeta_2})^{\frac{1}{p}}-z_1\bar{\zeta_1})^{3}}\\&+\frac{(p-1)e^{z_3\bar{\zeta_3}}z_2\bar{\zeta_2}\big((2+\frac{1}{p})(1-e^{z_3\bar{\zeta_3}}z_2\bar{\zeta_2})^{\frac{1}{p}}+(2-\frac{1}{p})z_1\bar{\zeta_1}\big)}{(1-e^{z_3\bar{\zeta_3}}z_2\bar{\zeta_2})^{3-\frac{1}{p}}((1-e^{z_3\bar{\zeta_3}}z_2\bar{\zeta_2})^{\frac{1}{p}}-z_1\bar{\zeta_1})^{3}}\\&+\frac{2e^{z_3\bar{\zeta_3}}z_2\bar{\zeta_2}\big((2+\frac{1}{p})(1-e^{z_3\bar{\zeta_3}}z_2\bar{\zeta_2})^{\frac{1}{p}}+(2-\frac{2}{p})z_1\bar{\zeta_1}\big)}{(1-e^{z_3\bar{\zeta_3}}z_2\bar{\zeta_2})^{3-\frac{2}{p}}((1-e^{z_3\bar{\zeta_3}}z_2\bar{\zeta_2})^{\frac{1}{p}}-z_1\bar{\zeta_1})^{4}}\bigg).
\end{align*}
For the domains below the third case, the kernel functions are more complicated and we will omit them here.
\end{ex}
We can generalize Theorems 1 and 2 when our ``target" domains involves $w\in \mathbb C^k$ instead of a single variable.
Let $\Omega\subseteq \mathbb C^{n+m}$ be $n$-star-shaped Hartogs in the first $n$ variables.
Consider the ``target" domains:
\begin{itemize}
\item
$U^{\alpha}=\{(z,z^{\prime},w)\in \mathbb C^{n+m}\times \mathbb C^k:(f_{\alpha}(z,w),z^{\prime})\in \Omega,\|w\|^2<1\}$

where  $$f_{\alpha}(z,w)=\Big(\frac{z_1}{(1-\|w\|^2)^{\frac{\alpha_1}{2}}},\dots,\frac{z_n}{(1-\|w\|^2)^{\frac{\alpha_n}{2}}}\Big)$$
and $\alpha_j$'s are positive numbers.

\item
$V^{\gamma}=\{(z,z^{\prime},w)\in \mathbb C^{n+m}\times \mathbb C^k:(g_{\gamma}(z,w),z^{\prime})\in \Omega\}$

where
$$g_{\gamma}(z,w)=\Big(e^{\frac{\gamma_1\|w\|^{2}}{2}}z_1,\dots,e^{\frac{\gamma_n\|w\|^{2}}{2}}z_n\Big)$$
and $\gamma_j$'s are positive numbers.
\end{itemize}	
Since we can construct a diagram from $\Omega$ to $V^{\gamma}$ in Example 7.1, the kernel function $K_{V^{\gamma}}$ can be obtained directly by repeatly applying Theorem 2:
\begin{thm}
	For $(z,z^{\prime},w;\zeta,\zeta^{\prime},\eta) \in V^{\gamma}\times V^{\gamma}$,
	\begin{equation}
	K_{V^{\gamma}}(z,z^{\prime},w;\bar \zeta,\bar \zeta^{\prime},\bar \eta)=D_{V^{\gamma}}K_{V^{\gamma}_{\eta}}(l(z,w,\eta),z^{\prime};\bar\zeta,\bar{\zeta^{\prime}})
	\end{equation}
	where 
	$$l(z,w,\eta)=\Big(z_1e^{\gamma_1(\langle w,{\eta}\rangle-\|\eta\|^2)},\dots,z_ne^{\gamma_n(\langle w{\eta}\rangle-\|\eta\|^2)}\Big)$$
	and $D_{V^{\gamma}}$ is the $k$-th order differential operator defined by $$D_{V^{\gamma}}=\frac{e^{(\gamma\cdot\mathbf 1)(\langle w,{\eta}\rangle-\|\eta\|^2)}}{\pi^k}\Big(\sum_{j=1}^{n}\gamma_j(I +z_j\frac{\partial}{\partial z_j})\Big)^k.$$
\end{thm}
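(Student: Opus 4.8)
The plan is to realize $V^{\gamma}$ as the last term of the chain of domains in the diagram of Example 7.1, apply Theorem 2 once for each of the $k$ coordinates of $w$, and then collapse the resulting composition of $k$ first-order operators into the single $k$-th order operator $D_{V^{\gamma}}$. First I would set up the chain of intermediate base domains: put $\Omega_0=\Omega\subseteq\mathbb C^{n+m}$ and, for $1\le i\le k$, let $\Omega_i\subseteq\mathbb C^{n+m+i}$ be obtained from $\Omega_{i-1}$ by the single-variable $V^{\gamma}$-construction in a fresh variable $w_i$, keeping the same weights $\gamma=(\gamma_1,\dots,\gamma_n)$ on the first $n$ coordinates. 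A direct substitution shows the exponents accumulate additively, so that
\[
\Omega_i=\Big\{(z,z^{\prime},w_1,\dots,w_i):\big(e^{\gamma_1(|w_1|^2+\cdots+|w_i|^2)/2}z_1,\dots,e^{\gamma_n(|w_1|^2+\cdots+|w_i|^2)/2}z_n,z^{\prime}\big)\in\Omega\Big\},
\]
and in particular $\Omega_k=V^{\gamma}$ with $\|w\|^2=|w_1|^2+\cdots+|w_k|^2$. By Lemma 3.1 each $\Omega_i$ is $(n+i)$-star-shaped Hartogs, hence in particular $n$-star-shaped Hartogs in $(z_1,\dots,z_n)$, so Theorem 2 applies at every stage with $(z^{\prime},w_1,\dots,w_{i-1})$ playing the role of the passive variables.

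Next I would apply Theorem 2 to pass from $K_{\Omega_{i-1}}$ to $K_{\Omega_i}$. Each such step contributes the single-variable prefactor-and-operator $\tfrac{1}{\pi}e^{(\gamma\cdot\mathbf 1)(w_i\bar\eta_i-|\eta_i|^2)}T$, where $T=\sum_{j=1}^n\gamma_j(I+z_j\frac{\partial}{\partial z_j})$, together with the modification $z_j\mapsto z_je^{\gamma_j(w_i\bar\eta_i-|\eta_i|^2)}$. The central observation, which I would isolate as a lemma, is that $T$ acts diagonally on the monomial grading in $z$: since $T(z^{\mathbf a})=(\gamma\cdot(\mathbf a+\mathbf 1))z^{\mathbf a}$, the operator $T$ commutes both with multiplication by any factor independent of $z$ and with every coordinate rescaling $z_j\mapsto c_jz_j$ with $c_j$ independent of $z$, because all of these preserve each monomial $z^{\mathbf a}$ up to a scalar. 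The exponential prefactors are $z$-free and the modifications are exactly such rescalings; consequently the $k$ single-variable operators all commute and their composition is $\tfrac{1}{\pi^k}e^{(\gamma\cdot\mathbf 1)(\langle w,\eta\rangle-\|\eta\|^2)}T^k$, with $\langle w,\eta\rangle=\sum_i w_i\bar\eta_i$ and $\|\eta\|^2=\sum_i|\eta_i|^2$, while the $k$ modifications compose to the stated $l(z,w,\eta)$. This is precisely $D_{V^{\gamma}}$ acting on $K_{V^{\gamma}_{\eta}}(l(z,w,\eta),z^{\prime};\bar\zeta,\bar{\zeta^{\prime}})$.

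Finally I would confirm the identity by the same expansion-and-reproduction scheme used for Theorems 1 and 2, which also serves as an independent check of the composition bookkeeping. Using the biholomorphism $g_{\gamma}(\cdot,\eta):V^{\gamma}_{\eta}\to\Omega$, Corollary 2.1, and the eigenvalue $T^k(z^{\mathbf a})=(\gamma\cdot(\mathbf a+\mathbf 1))^kz^{\mathbf a}$, the right-hand side expands as
\[
\frac{1}{\pi^k}\sum_{\mathbf a,\mathbf b}\frac{(\gamma\cdot(\mathbf a+\mathbf 1))^k\,e^{(\gamma\cdot(\mathbf a+\mathbf 1))\langle w,\eta\rangle}(z\bar\zeta)^{\mathbf a}\phi_{\mathbf a,\mathbf b}(z^{\prime})\overline{\phi_{\mathbf a,\mathbf b}(\zeta^{\prime})}}{\|z^{\mathbf a}\phi_{\mathbf a,\mathbf b}\|^2_{L^2(\Omega)}},
\]
which, after expanding the exponential, has the admissible form in $(w\bar\eta)^{\mathbf c}$. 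Testing against a basis element $\zeta^{\mathbf a}\phi_{\mathbf a,\mathbf b}(\zeta^{\prime})\eta^{\mathbf c}$ and reproducing over the slice reduces the remaining integral to $\int_{\mathbb C^k}\eta^{\mathbf c}e^{(\gamma\cdot(\mathbf a+\mathbf 1))(\langle w,\eta\rangle-\|\eta\|^2)}\,dV(\eta)$, which factors into $k$ one-dimensional Gaussian integrals, each contributing $\pi(\gamma\cdot(\mathbf a+\mathbf 1))^{-1}w_i^{c_i}$ and exactly cancelling the $(\gamma\cdot(\mathbf a+\mathbf 1))^k/\pi^k$ prefactor to return $z^{\mathbf a}\phi_{\mathbf a,\mathbf b}(z^{\prime})w^{\mathbf c}$.

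I expect the main obstacle to be the operator-composition step: making rigorous that interleaving the $k$ differential operators with the $k$ modifications collapses cleanly to $\tfrac{1}{\pi^k}T^k$ with no cross terms. The monomial-grading lemma is what controls this. I would also need to verify carefully that each $\Omega_i$ genuinely satisfies the hypotheses of Theorem 2 at every stage, in particular extending the square-integrability bookkeeping of Lemma 3.2 to several $w$-variables, so that the intermediate kernels and their orthogonal expansions are legitimate.
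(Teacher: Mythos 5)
Your proposal is correct and matches the paper's approach: the paper proves this theorem in a single sentence, asserting that $V^{\gamma}$ sits at the end of a chain as in Example 7.1 and that the formula follows by repeatedly applying Theorem 2, which is exactly your iteration-plus-operator-collapse argument (your observation that $T=\sum_j\gamma_j(I+z_j\frac{\partial}{\partial z_j})$ acts diagonally on the monomial grading, hence commutes with the $z$-free prefactors and rescalings, is the right way to make the one-line claim rigorous). Your third paragraph, the direct expansion-and-reproduction verification via the factored Gaussian integrals, is not in the paper for this theorem but faithfully mirrors the paper's proof of the $U^{\alpha}$ analogue (Theorem 6, where the beta-integral computation replaces your Gaussians), and the computation checks out.
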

 The trick used in Example 7.1 does not work for $U^{\alpha}$. Nevertheless, we have the following result:
\begin{thm}
For $(z,z^{\prime},w;\zeta,\zeta^{\prime},\eta)\in U^{\alpha}\times U^{\alpha}$,
\begin{equation}\label{45}
K_{U^{\alpha}}(z,z^{\prime},w;\bar \zeta,\bar \zeta^{\prime},\bar \eta)=D_{U^{\alpha}}K_{U^{\alpha}_{\eta}}(h(z,w,\eta),z^{\prime};\bar\zeta,\bar{\zeta^{\prime}})
\end{equation}
where 
$$h(z,w,\eta)=\Big(z_1(\frac{1-\|\eta\|^2}{1-\langle w, \eta\rangle})^{\alpha_1},\dots,z_n(\frac{1-\|\eta\|^2}{1-\langle w, \eta\rangle})^{\alpha_n}\Big)$$
and $D_{U^{\alpha}}$ is the $k$-th order differential operator defined by $$D_{U^{\alpha}}=\frac{(1-\|\eta\|^2)^{\alpha\cdot\mathbf 1}}{\pi^k(1-\langle w, \eta\rangle)^{k+1+\alpha\cdot\mathbf 1}}\prod_{j=1}^{k}\Big(jI+\sum_{l=1}^{n}\alpha_{l}(I+z_{l}\frac{\partial}{\partial z_{l}})\Big).$$
\end{thm}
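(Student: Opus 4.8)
The plan is to run the proof of Theorem 1 with only cosmetic changes to its architecture, replacing the one-dimensional parameter disk $\mathbb B^1$ by the ball $\mathbb B^k$ and the first order operator by the $k$-th order product operator. Throughout I use the vector-$w$ analogues of Lemmas 3.1--3.3, which hold by identical reasoning; in particular $f_\alpha(\cdot,\eta)\colon U^\alpha_\eta\to\Omega$ is a linear biholomorphism with $|Jf_\alpha(\cdot,\eta)|^2=(1-\|\eta\|^2)^{-\alpha\cdot\mathbf 1}$. Writing $K_1$ for the right-hand side of (\ref{45}), I would first check that $K_1$ is defined on $U^\alpha\times U^\alpha$, i.e.\ that $(h(z,w,\eta),z')\in U^\alpha_\eta$, equivalently $(f_\alpha(h(z,w,\eta),\eta),z')\in\Omega$. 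A direct computation gives $f_\alpha(h(z,w,\eta),\eta)_j=z_j(1-\|\eta\|^2)^{\alpha_j/2}/(1-\langle w,\eta\rangle)^{\alpha_j}$, so by the $n$-star-shapedness of $\Omega$ the containment follows once we verify the pointwise bound $|f_\alpha(h(z,w,\eta),\eta)_j|\le|z_j|/(1-\|w\|^2)^{\alpha_j/2}$. This reduces exactly to the ball inequality $(1-\|w\|^2)(1-\|\eta\|^2)\le|1-\langle w,\eta\rangle|^2$, valid for all $w,\eta\in\mathbb B^k$: minimizing the right-hand side over the admissible range $|\langle w,\eta\rangle|\le\|w\|\,\|\eta\|$ one finds the extremum in the rank-one (parallel) configuration, where it equals a perfect square.

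Next I would produce the series expansion. Using the transformation rule (\ref{4}) to write $K_{U^\alpha_\eta}$ in terms of $K_\Omega$, expanding $K_\Omega$ by Corollary 2.1 in a complete orthogonal system $\{z^{\mathbf a}\phi_{\mathbf a,\mathbf b}(z')\}$ of $A^2(\Omega)$, and then applying $D_{U^\alpha}$, I would obtain for $K_1$ an expansion of the form $\sum_{\mathbf a,\mathbf b,c}c_{\mathbf a,\mathbf b,c}(z\bar\zeta)^{\mathbf a}\phi_{\mathbf a,\mathbf b}(z')\overline{\phi_{\mathbf a,\mathbf b}(\zeta')}\,w^c\bar\eta^c$, where now $c\in\mathbb N^k$. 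The multinomial expansion of $(1-\langle w,\eta\rangle)^{-(k+1+\alpha\cdot\mathbf 1)}$ forces the $w$- and $\eta$-multi-indices to coincide; this is what makes $K_1$ Hermitian symmetric and of exactly the shape matching the orthogonal system $\{z^{\mathbf a}\phi_{\mathbf a,\mathbf b}(z')w^c\}$ of $A^2(U^\alpha)$ provided by Lemma 2.2.

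The heart of the argument is the reproducing property. For an arbitrary $z^{\mathbf a}\phi_{\mathbf a,\mathbf b}(z')w^c\in A^2(U^\alpha)$ I would compute $\int_{U^\alpha}K_1\,\zeta^{\mathbf a}\phi_{\mathbf a,\mathbf b}(\zeta')\eta^c\,dV$. Fubini separates this into an outer integral over $\eta\in\mathbb B^k$ and an inner integral over $U^\alpha_\eta$; the operator $D_{U^\alpha}$ commutes past the $\zeta$-integration, the reproducing property of $K_{U^\alpha_\eta}$ evaluates the inner integral at $h(z,w,\eta)$, and the product operator $\prod_{j=1}^k\bigl(jI+\sum_l\alpha_l(I+z_l\frac{\partial}{\partial z_l})\bigr)$ acts on $z^{\mathbf a}$ with eigenvalue $\prod_{j=1}^k\bigl(j+\alpha\cdot(\mathbf a+\mathbf 1)\bigr)=\bigl(1+\alpha\cdot(\mathbf a+\mathbf 1)\bigr)_k$. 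Collecting the prefactors and using $h(z,w,\eta)^{\mathbf a}=z^{\mathbf a}\bigl((1-\|\eta\|^2)/(1-\langle w,\eta\rangle)\bigr)^{\alpha\cdot\mathbf a}$, everything collapses onto the single ball integral
\begin{equation*}
\int_{\mathbb B^k}\frac{(1-\|\eta\|^2)^{N}\,\eta^c}{\pi^k(1-\langle w,\eta\rangle)^{k+1+N}}\,dV(\eta),\qquad N=\alpha\cdot(\mathbf a+\mathbf 1).
\end{equation*}

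Evaluating this integral is the main obstacle, and it is here that the $k$-th order operator earns its precise form. Expanding $(1-\langle w,\eta\rangle)^{-(k+1+N)}=\sum_{j}\frac{(k+1+N)_j}{j!}\langle w,\eta\rangle^j$ and writing $\langle w,\eta\rangle^j$ by the multinomial theorem, the rotational invariance of $\mathbb B^k$ annihilates every term except the one whose $\bar\eta$-index equals $c$, leaving $w^c$ times the Dirichlet integral $\frac{1}{\pi^k}\int_{\mathbb B^k}(1-\|\eta\|^2)^N|\eta^c|^2\,dV=\frac{\Gamma(N+1)\,c!}{\Gamma(N+1+k+|c|)}$. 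After the Pochhammer factor $(k+1+N)_{|c|}$ and $c!$ cancel, a telescoping of Gamma functions reduces the integral to $w^c/(N+1)_k$, so the rising factorial $(N+1)_k=\bigl(1+\alpha\cdot(\mathbf a+\mathbf 1)\bigr)_k$ exactly cancels the operator eigenvalue and the whole expression returns $z^{\mathbf a}\phi_{\mathbf a,\mathbf b}(z')w^c$. This confirms the reproducing property; together with the well-definedness, the expansion of the second step, and Lemma 2.1, it identifies $K_1$ with $K_{U^\alpha}$, completing the proof.
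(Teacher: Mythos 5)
Your proposal is correct and follows essentially the same route as the paper's proof: well-definedness via the ball inequality $(1-\|w\|^2)(1-\|\eta\|^2)\le|1-\langle w,\eta\rangle|^2$, the series expansion through the transformation rule and Corollary 2.1, and the reproducing-property computation that pits the operator eigenvalue $\big(1+\alpha\cdot(\mathbf a+\mathbf 1)\big)_k$ against a Dirichlet-type integral over $\mathbb B^k$. The only cosmetic difference is that you quote the classical Dirichlet integral formula outright, whereas the paper proves the corresponding identity over the simplex by induction on $k$.
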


\begin{proof}
Let $K_1(z,z^{\prime},w;\bar \zeta,\bar \zeta^{\prime},\bar \eta)$ denote the right-hand side of (\ref{45}).
By the same argument in the proof of Theorem 1, $K_1(z,z^{\prime},w;\bar \zeta,\bar \zeta^{\prime},\bar \eta)$ is defined on $U^{\alpha}\times U^{\alpha}$ and has the expansion:
\begin{equation*}
\sum_{\mathbf a,\mathbf b,\mathbf c}{c_{\mathbf a,\mathbf b,\mathbf c}(z\bar \zeta)^{\mathbf a}\phi_{\mathbf a,\mathbf b}(z^{\prime})\overline{\phi_{\mathbf a,\mathbf b}(\zeta^{\prime})}(w\bar{\eta})^{\mathbf c}}.
\end{equation*}
For arbitrary $z^\mathbf a\phi_{\mathbf a,\mathbf b}(z^\prime)w^{\mathbf c}\in A^2(U^{\alpha})$,
\begin{align}\label{46}
&\int_{U^{\alpha}}K_1(z,z^{\prime},w;\bar{\zeta},\bar{\zeta^{\prime}},\bar{\eta})\zeta^\mathbf a\phi_{\mathbf a,\mathbf b}(\zeta^\prime)\eta^{\mathbf c}dV\nonumber
\\=&\int_{\mathbb B^k}\eta^{\mathbf c}\int_{U^{\alpha}_\eta}D_{U^\alpha}K_{U^{\alpha}_{\eta}}\big(h(z,w,\eta),z^{\prime};\bar{\zeta};\bar{\zeta^{\prime}}\big)\zeta^\mathbf a\phi_{\mathbf a,\mathbf b}(\zeta^\prime)dV(\zeta,\zeta^{\prime})dV(\eta).\;\;\;\;\;\;\;
\end{align}
Using the reproducing property of $K_{U^{\alpha}_\eta}$ on $U^{\alpha}_\eta$, we have
\begin{align}
&\int_{U^{\alpha}_\eta}D_{U^\alpha}K_{U^{\alpha}_{\eta}}(h(z,w,\eta),z^{\prime};\bar{\zeta};\bar{\zeta^{\prime}})\zeta^\mathbf a\phi_{\mathbf a,\mathbf b}(\zeta^\prime)dV(\zeta,\zeta^{\prime})\nonumber
\\=&\Big(\prod_{{j}=1}^{k}\big({j}+\alpha\cdot(\mathbf a +\mathbf 1)\big)\Big)\frac{(1-\|\eta\|^2)^{\alpha\cdot\mathbf 1}}{\pi(1-\langle w,\eta\rangle)^{2+\alpha\cdot\mathbf 1}}h(z,w,\eta)^{\mathbf a}\phi_{\mathbf a,
	\mathbf b}(z^{\prime}).
\end{align}
Therefore the integral in the last line  of (\ref{46}) becomes
\begin{equation}\label{48}
\Big(\prod_{{j}=1}^{k}\big({j}+\alpha\cdot(\mathbf a +\mathbf 1)\big)\Big)\phi_{\mathbf a,\mathbf b}(z^\prime)\int_{\mathbb B^1}\frac{(1-\|\eta\|^2)^{\alpha\cdot\mathbf 1}\eta^{\mathbf c}h(z,w,\eta)^{\mathbf a}}{\pi^k(1-\langle w,\eta\rangle)^{1+k+\alpha\cdot \mathbf 1}}dV(\eta).
\end{equation}
Since
$h(z,w,\eta)=\Big(z_1(\frac{1-\|\eta\|^2}{1-\langle w,\eta\rangle})^{\alpha_1},\dots,z_n(\frac{1-\|\eta\|^2}{1-\langle w,\eta\rangle})^{\alpha_n}\Big)$, (\ref{48}) equals
\begin{equation}\label{49}
\frac{\prod_{{j}=1}^{k}\big({j}+\alpha\cdot(\mathbf a +\mathbf 1)\big)z^{\mathbf a}\phi_{\mathbf a,\mathbf b}(z^\prime)}{\pi^k}\int_{\mathbb B^k}\frac{(1-\|\eta\|^2)^{\alpha\cdot(\mathbf a+\mathbf 1)}\eta^{\mathbf c}}{(1-\langle w,\eta\rangle)^{1+k+\alpha\cdot (\mathbf a+\mathbf 1)}}dV(\eta).
\end{equation}
Expanding the denominator in (\ref{49}), we have
\begin{align}\label{50}
(\ref{49})=&z^{\mathbf a}\phi_{\mathbf a,\mathbf b}(z^\prime)\int_{\mathbb B^k}\sum_{\mathbf p}\frac{\big(1+\alpha\cdot (\mathbf a+\mathbf 1)\big)_{(\mathbf j\cdot \mathbf 1)+k}(1-\|\eta\|^2)^{\alpha\cdot(\mathbf a+\mathbf 1)}(w\bar{\eta})^{\mathbf p}}{\pi^k \prod_{p=1}^{k}(p_j)!}\eta^{\mathbf c}dV\nonumber
\\=& z^{\mathbf a}\phi_{\mathbf a,\mathbf b}(z^\prime)w^{\mathbf c}\int_{\mathbb B^k}\frac{\big(1+\alpha\cdot (\mathbf a+\mathbf 1)\big)_{({\mathbf c\cdot \mathbf 1})+k}(1-\|\eta\|^2)^{\alpha\cdot(\mathbf a+\mathbf 1)}{\eta}^{\mathbf c}\bar{\eta}^{\mathbf c}}{\pi^k \prod_{j=1}^{k}(c_j)!}dV.
\end{align}
By letting $r_j=|\eta_j|^2$, we have
\begin{equation}\label{51}
\int_{\mathbb B^k}(1-\|\eta\|^2)^{\alpha\cdot(\mathbf a+\mathbf 1)}|{\eta}|^{2\mathbf c}dV
=\pi^k\int_{\mathbf B^k_{+}}(1-\sum_{j=1}^{k}r_j)^{\alpha\cdot(\mathbf a+\mathbf 1)}r^{\mathbf c}dV,
\end{equation}
where $\mathbf B^k_{+}=\{(r_1,\dots,r_k)\in \mathbb R^k_+:\sum_{j=1}^{k}r_j<1\}$.
We claim
\begin{equation}\label{52}
\pi^k\int_{\mathbf B^k_{+}}(1-\sum_{j=1}^{k}r_j)^{\alpha\cdot(\mathbf a+\mathbf 1)}r^{\mathbf c}dV=\frac{\pi^k\prod_{j=1}^{k}(c_j)!}{\big(1+\alpha\cdot(\mathbf a +\mathbf 1)\big)_{(\mathbf c\cdot\mathbf 1)+k}}.
\end{equation}
Assuming the claim, then (\ref{49}) equals
$z^{\mathbf a}\phi_{\mathbf a,\mathbf b}(z^\prime)w^{\mathbf c}$
which completes the proof.

To prove (\ref{52}), we do induction on $k$. When $k=1$, we have
\begin{equation}
\int_{0}^{1}(1-r)^{\alpha\cdot(\mathbf a+\mathbf 1)}r^{c}dV=\frac{\Gamma\big(1+\alpha\cdot(\mathbf a+\mathbf 1)\big)\Gamma(c+1)}{\Gamma\big(2+c+\alpha\cdot(\mathbf a+\mathbf 1)\big)},\nonumber
\end{equation}
and (\ref{52}) holds.
Suppose (\ref{52}) holds when $k<N$. For $k=N$,
\begin{align}\label{53}
&\int_{\mathbb B^N_{+}}(1-\sum_{j=1}^{N}r_j)^{\alpha\cdot(\mathbf a+\mathbf 1)}r^{\mathbf c}dV\nonumber
\\=&\int_{0}^{1}{r_N}^{c_N}\int_{W_{r_N}}(1-\sum_{j=1}^{N}r_j)^{\alpha\cdot(\mathbf a+\mathbf 1)}\prod_{j=1}^{N-1}r_j^{c_j}dr_1\dots dr_{N-1}dr_N,
\end{align}
where $W_{r_N}=\{(r_1,\dots,r_{N-1})\in \mathbb R^{N-1}_+:\sum_{j=1}^{N-1}r_j<1-r_N\}$.
By substituting $t_j=\frac{r_j}{1-r_N}$  for $1\leq j\leq N-1$ to the integral in the second line of (\ref{53}), we obtain
\begin{align}\label{54}
&\Big(\int_{0}^{1}{r_N}^{c_N}(1-r_N)^{\alpha\cdot(\mathbf a+\mathbf 1)+\sum_{j=1}^{N-1}(c_j+1)}dr_N\Big)\nonumber
\\&\times\Big(\int_{\mathbb B^{N-1}_+}(1-\sum_{j=1}^{N-1}t_j)^{\alpha\cdot(\mathbf a+\mathbf 1)}\prod_{j=1}^{N-1}t_j^{c_j}dt_1\dots dt_{N-1}\Big).
\end{align}
Using the definition of the beta function and the induction hypothesis yields
\begin{align}
(\ref{54})=&\frac{\Gamma(c_N+1)\Gamma\big(\alpha\cdot(\mathbf a+\mathbf 1)+\sum_{j=1}^{N-1}(c_j+1)+1\big)}{\Gamma\big(\alpha\cdot(\mathbf a+\mathbf 1)+\sum_{j=1}^{N}(c_j+1)+1\big)}\nonumber
\\&\times\frac{\prod_{j=1}^{N-1}(c_j)!}{\big(1+\alpha\cdot(\mathbf a +\mathbf 1)\big)_{\sum_{j=1}^{N-1}(c_j+1)}}\nonumber
\\=&\frac{\prod_{j=1}^{N}(c_j)!}{(\alpha\cdot\big(\mathbf a+\mathbf 1)+\sum_{j=1}^{N-1}c_j+N\big)_{c_N+1}\big(1+\alpha\cdot(\mathbf a +\mathbf 1)\big)_{\sum_{j=1}^{N-1}(c_j+1)}}\nonumber
\\=&\frac{\prod_{j=1}^{N}(c_j)!}{\big(1+\alpha\cdot(\mathbf a +\mathbf 1)\big)_{\mathbf c\cdot \mathbf 1+N}}\nonumber.
\end{align}
Therefore (\ref{52}) holds for all $k$.
\end{proof}	
\bibliographystyle{plain}
\bibliography{manuscript} 

\begin{thebibliography}{10}

\bibitem{2015Beberok}
T.~Beberok.
\newblock An explicit computation of the {Bergman} kernel function.
\newblock {\em Complex Var. Elliptic Equ.}, 2015.

\bibitem{Bergman}
S.~Bergman.
\newblock The kernel function and conformal mapping.
\newblock {\em Amer Math Soc Survey, 2nd Ed}, 1970.

\bibitem{1999BFS}
H.~P. Boas, S.~Fu, and E.~J. Straube.
\newblock The {Bergman} kernel function: explicit formulas and zeros.
\newblock {\em Proc. Amer. Math. Soc.}, 127(3):805--811, 1999.

\bibitem{Monvel}
C.~L. {Boutet de Monvel} and J.~Sj\"ostrand.
\newblock Sur la singularit\'e des noyaux de {Bergman} et de {Szeg\"o}.
\newblock {\em Journ\'ees \'equations aux d\'eriv\'ees partielles},
  34-35:123--164, 1976.

\bibitem{1978D'A1}
J.~P. D'Angelo.
\newblock A note on the {Bergman} kernel.
\newblock {\em Duke Math J.}, 45:259--265, 1978.

\bibitem{1994D'A2}
J.~P. D'Angelo.
\newblock An explicit computation of the {Bergman} kernel function.
\newblock {\em J. Geom. Anal.}, 4:23--34, 1994.

\bibitem{Fefferman}
C.~Fefferman.
\newblock The {Bergman} kernel and biholomorphic mappings of pseudo-convex
  domains.
\newblock {\em Invent. Math.}, 37:1--65, 1974.

\bibitem{FR}
F.~Forelli and W.~Rudin.
\newblock Projections on spaces of holomorphic functions in balls.
\newblock {\em Indiana Univ. Math. J.}, 24:593--602, 1974/75.

\bibitem{1996FH1}
G.~Francsics and N.~Hanges.
\newblock The {Bergman} kernel of complex ovals and multivariable
  hypergeometric functions.
\newblock {\em J. Funct. Anal.}, 142:494--510, 1996.

\bibitem{1997FH2}
G.~Francsics and N.~Hanges.
\newblock Asymptotic behavior of the {Bergman} kernel and hypergeometric
  functions.
\newblock {\em Contemp. Math.}, 205:79--92, 1997.

\bibitem{Krantz}
S.~G. Krantz.
\newblock Function theory of several complex variables.
\newblock {\em Providence, R.I.: American Mathematical Society}, 2002.

\bibitem{Ligocka}
E.~Ligocka.
\newblock On the {Forelli-Rudin} construction and weighted {Bergman}
  projections.
\newblock {\em Studia Math.}, 94:257--272, 1989.

\bibitem{McNeal1}
J.~D. McNeal.
\newblock Boundary behavior of {Bergman} kernel function in {$\mathbb C^2$}.
\newblock {\em Duke Math J.}, 58:499--512, 1989.

\bibitem{McNeal2}
J.~D. McNeal.
\newblock Estimates on the {Bergman} kernels of convex domains.
\newblock {\em Adv. Math.}, 109:108--139, 1994.

\bibitem{2008Park1}
J.~D. Park.
\newblock New formulas of the {Bergman} kernels for complex ellipsoids in
  {$\mathbb C^2$}.
\newblock {\em Proc. Amer. Math. Soc.}, 136(12):4211--4221, 2008.

\bibitem{2013Park2}
J.~D. Park.
\newblock Explicit formulas of the {Bergman} kernels for 3-dimensional complex
  ellipsoids.
\newblock {\em J. Math. Anal. Appl.}, 400:664--674, 2013.

\bibitem{2013Ya}
A.~Yamamori.
\newblock The {Bergman} kernel of the {Fork-Bargmann-Hartogs} domain and the
  polylogarithm function.
\newblock {\em Complex Var. Elliptic Equ.}, 58:783--793, 2013.

\end{thebibliography}
\end{document}